
\documentclass[12pt, twoside]{article}
\usepackage{amsmath,amssymb}
\usepackage{amsthm}
\usepackage{graphicx}
\usepackage{epsfig}
\usepackage{cite}
\usepackage{subfigure}
\usepackage{hyperref}

\newtheorem{theorem}{Theorem}[section]
\newtheorem{lemma}{Lemma}[section]

\newtheorem{assumption}{Assumption}[section]

\numberwithin{equation}{section}

\setcounter{MaxMatrixCols}{10}

\numberwithin{equation}{section} \makeatletter
\setlength{\textwidth}{17.0cm} \setlength{\oddsidemargin}{-0.8cm}
\setlength{\evensidemargin}{-0.8cm} \setlength{\topmargin}{0.0cm}
\setlength{\textheight}{21cm} \pagestyle{myheadings} \markboth{$~$
\hfill {\rm } \hfill $~$} {$~$ \hfill {\rm
} \hfill$~$}

\begin{document}

\title{\textbf{Hopf bifurcation of the Michaelis-Menten \\type ratio-dependent predator-prey \\model with age structure}}
\author{Xiangming Zhang{\small \textsc{$^{a,}$\thanks{%
Research was partially supported by NSFC (Grant No. 11471044 and 11771044) and the
Fundamental Research Funds for the Central Universities.}}} and Zhihua Liu%
{\small \textsc{$^{a,*,}$}}\textsc{\thanks{%
{\small Corresponding author. \newline \indent~~E-mail addresses: xiangmingzhang@mail.bnu.edu.cn (X. Zhang), zhihualiu@bnu.edu.cn (Z. Liu).}}} \\
$^{a}$School of Mathematical Sciences, Beijing Normal University,\\
Beijing, 100875, People's Republic of China}

\date{}
\maketitle
\begin{abstract}
This paper is devoted to the study of
a predator-prey model with predator-age structure that involves Michaelis-Menten type ratio-dependent functional response. We study some dynamical properties of the model by using the theory of integrated semigroup and the Hopf bifurcation theory for semilinear equations with non-dense domain. The existence of Hopf bifurcation is established by regarding the biological maturation period $\tau$ as the bifurcation parameter. The computer simulations and sensitivity analysis on parameters are also performed to illustrate the conclusions.

\textbf{Key words:} Predator-prey model; Michaelis-Menten type; Ratio-dependent; Age structure; Non-densely defined Cauchy problem; Hopf bifurcation

\textbf{Mathematics Subject Classification:} 34C20; 34K15; 37L10
\end{abstract}

\section{Introduction}

\noindent

In the predator-prey population dynamics, one of the most fashionable and considerable mathematical model sketching a predator-prey interaction is the following well-known Lotka-Volterra type predator-prey model with Michaelis-Menten (or Holling type II) functional response \cite{FreedmanHerbertI-1980}:
\begin{equation}\label{predator-prey}
  \left\{
     \begin{array}{l}
       x'(t)=rx(1-\frac{x}{K})-p(x)y, \\
       y'(t)=\eta p(x)y-\sigma y, \\
       x(0)>0, y(0)>0, \\
     \end{array}
   \right.
\end{equation}
where $x$ and $y$ denote prey and predator density, respectively; $r$, $K$, $\eta$ and $\sigma$ are positive constants that denote prey intrinsic growth rate, carrying capacity of prey of the environment that is frequently determined by the available sustaining resources, coefficient for the conversion that predator intake to per capital prey and predator mortality rate. $p(x)=\frac{\alpha x}{m+x}$ is the
Michaelis-Menten (or Holling type II) functional response, where $\alpha>0$ is the capturing rate and $m>0$ is the half saturation constant.
From a biological point of view,
the so-called predation term $p(x)$, which is the functional response of the predator to the change in the density of prey, generally demonstrates some saturation effect. Obviously the function $p(x)=\frac{\alpha x}{m+x}$ depends merely on prey density $x$.
Therefore it is often called a \emph{prey-dependent} response function. The model (\ref{predator-prey}) demonstrates the well-known ``paradox of enrichment" \cite{HairstonSmithSlobodkin-JTB-1960,Rosenzweig-Science-1971} and the so-called ``biological control paradox" \cite{Luck-TEE-1990}.
According to \cite{Akcakaya-Ecology-1995,CosnerDeAngelisAultOlson-TPB-1999}, a more suitable realistic predation term of predator-prey model depends upon the amount of prey that each predator can share.
This conclusion is supported by numerous fields, laboratory experiments and observations \cite{ArditiGinzburg-JTB-1989}.
On the basis of the Michaelis-Menten (or Holling type II function), \cite{ArditiGinzburg-JTB-1989} proposed the following response function of the form
\begin{equation*}
  p\left(\frac{x}{y}\right)=\frac{\alpha\frac{x}{y}}{m+\frac{x}{y}}=\frac{\alpha x}{my+x},
\end{equation*}
where $x$ and $y$ stand for prey and predator density, respectively. Such a functional response is usually called a \emph{ratio-dependent} response function.

The difference between ratio-dependent models and prey-dependent models has been discussed in \cite{Berryman-E-1992}.
Comparing the prey-dependent predator-prey models, \cite{ArditiGinzburg-JTB-1989} graphically analysed  the advantages of the
ratio-dependent predator-prey systems by using the isocline method.
In this paper, we will contribute to the Hopf bifurcation analysis for ratio-dependent predator-prey with age structure rather than discuss the
general ecological significance of this class of models.

Combined with local stability analysis and simulations, \cite{ArditiGinzburg-JTB-1989, Berryman-E-1992} demonstrated that the ratio-dependent models have ability of producing more complex and more reasonable dynamics \cite{KuangBeretta-JMB-1998,JostArinoArditi-BMB-1999,XiaoRuan-JMB-2001}.
In document \cite{KuangBeretta-JMB-1998}, the authors discussed the model (\ref{predator-prey}) and considered the global behaviors of solutions of model (\ref{predator-prey}). They also demonstrated that ratio-dependent predator-prey systems are rich in boundary dynamics and if the positive steady state of the system (\ref{predator-prey}) is locally asymptotically stable, then
the system has no nontrivial positive periodic solutions. \cite{XiaoRuan-JMB-2001} studied the qualitative behavior of a class of ratio-dependent predator-prey system at the origin and shown that there can exist numerous kinds of topological structures in a neighborhood of the origin.

Age is one of the most prevalent and significant parameters structuring a population. In a word, many internal variables, at the level of the single individual, are inevitably depending upon the age because different age implies different reproduction and survival capacities, and, also different behaviors. Recently the papers about age structure become increasingly commonplace (see \cite{Khajanchi-AMC-2014,MimmoIannelli-1995,WangLangZou-NARWA-2017,XuZhang-DCDS-B-2016,
YangLiZhang-IJB-2016,YangRuanXiao-MBE-2015,LiuLi-JNS-2015,TangLiu-AMM-2016,WangLiu-JMAA-2012,CushingSaleem-JMB-1982}). However, most of the results on age structure model focus on the existence, bounded and stability of the positive solutions \cite{Khajanchi-AMC-2014,MimmoIannelli-1995,WangLangZou-NARWA-2017,XuZhang-DCDS-B-2016,
YangLiZhang-IJB-2016,YangRuanXiao-MBE-2015,CushingSaleem-JMB-1982}. \cite{TangLiu-AMM-2016} investigated the Hopf bifurcation of prey-dependent predator-prey model with predator age structure. The authors formulated the model as an abstract non-densely defined Cauchy problem and derived the existence of Hopf bifurcation. However, they considered the predation term with prey-dependent response function.

Motivated by the references \cite{KuangBeretta-JMB-1998,XiaoRuan-JMB-2001,TangLiu-AMM-2016},
we reconsider the Michaelis-Menten predator-prey model (\ref{predator-prey}) with an predator-age structure.
As far as we know, the age structure model can be considered as an abstract Cauchy problem with non-dense domain.
In this paper, we attempt to investigate the model (\ref{system}) by means of the theory of integrated semigroup and the Hopf bifurcation theory \cite{LiuMagalRuan-ZAMP-2011}.
Furthermore, the existence of Hopf bifurcation is investigated and the numerical simulations are also presented
to support our conclusions. Our results show that when the bifurcation parameter $\tau$ passes through a critical value, the Hopf bifurcation occurs.

The rest of this paper is organized as follows. In Section 2, we first describe the Michaelis-Menten type ratio-dependent
predator-prey model with age structure. Then this model is reformulated as an abstract non-densely defined Cauchy problem and the equilibria, linearized equation and characteristic equation are investigated. In Section 3, we show the existence of Hopf bifurcation. The numerical results are presented in Section 4. Sensitivity analysis are carried out in Section 5. Some conclusions are given in Section 6.

\section{Preliminaries}
\noindent
\subsection{Model description}

\noindent

In this section, we introduce the Michaelis-Menten type ratio-dependent
predator-prey model with age structure. Let $a$ be the predator-age variable. $u(t,a)$ is the distribution function of the predators over predator-age $a$ at time $t$. Then the number of the predators at time $t$ equals to $\int_{0}^{+\infty}{u(t,a)da}$. Correspondingly, the predation term that involves the ratio-dependent response function is given by
\begin{equation*}
  p\left(\frac{V(t)}{\int_{0}^{+\infty}{u(t,a)da}}\right)=\frac{\alpha\frac{V(t)}{\int_{0}^{+\infty}{u(t,a)da}}}{m+\frac{V(t)}{\int_{0}^{+\infty}{u(t,a)da}}}
  =\frac{\alpha V(t)}{m\int_{0}^{+\infty}{u(t,a)da}+V(t)}.
\end{equation*}

\noindent
In mathematical terms, the dynamics of such a system of predator and prey may be written as
\begin{equation}\label{system}
\left\{
\begin{array}{l}
\frac{\partial u(t,a)}{\partial t}+\frac{\partial u(t,a)}{\partial a} = -\sigma u(t,a),\\
\frac{dV(t)}{dt}=rV(t)\left(1-\frac{V(t)}{K}\right)-\frac{\alpha V(t)}{m\int_{0}^{+\infty}{u(t,a)da}+V(t)}\int_{0}^{+\infty}{u(t,a)da} ,\\
u(t,0)=\eta \frac{\alpha V(t)}{m\int_{0}^{+\infty}{u(t,a)da}+V(t)}\int_{0}^{+\infty}{\beta(a)u(t,a)da},\quad t>0,\\
u(0,\cdot)=u_{0}\in L_{+} ^{1}((0, + \infty ),\mathbb{R}),\quad V(0)=V_{0}\geq0,
\end{array}
\right.
\end{equation}
where $V$ is the prey density; $r=\Lambda-\mu$ is the intrinsic growth rate of the prey, and the other parameters are the same as the model (\ref{predator-prey}). Here and subsequently, $\beta(a)$ is an age-specific fertility function related to predator-age $a$ and satisfies the following assumption \ref{assumption1}.
\begin{assumption}\label{assumption1}
Assume that
\begin{equation*}
  \beta(a):=\left\{
               \begin{array}{cl}
                 \beta^{*}, &\quad \mbox{if} \quad a\geq \tau, \\
                 0, &\quad \mbox{if} \quad a\in (0,\tau), \\
               \end{array}
             \right.
\end{equation*}
where $\tau>0$ and $\beta^{*}>0$. Additionally, it is beneficial and reasonable to
assume that the predator population shows a stable trend. That is, $\int_{0}^{+\infty}{\beta(a)e^{-\sigma a}da}=1$, where $e^{-\sigma a}$ denotes the survival probability.
\end{assumption}

\noindent
\subsection{Rescaling time and age}

\noindent

In this subsection, our destination is to obtain a smooth dependency of the system (\ref{system}) with respect to $\tau$ (i.e., in order to use the parameter $\tau$ as a bifurcation parameter). We first normalize $\tau$ in (\ref{system}) by the time-scaling and age-scaling
\begin{equation*}
  \hat{a}=\frac{a}{\tau} \quad \mbox{and} \quad \hat{t}=\frac{t}{\tau},
\end{equation*}
and the following distribution
\begin{equation*}
  \hat{V}(\hat{t})=V(\tau\hat{t}) \quad \mbox{and} \quad \hat{u}(\hat{t},\hat{a})=\tau u(\tau\hat{t},\tau\hat{a}).
\end{equation*}
For abbreviation, after the change of variables we drop the hat notation and obtain the following new system
\begin{equation}\label{newsystem}
\left\{
\begin{array}{l}
\frac{\partial u(t,a)}{\partial t}+\frac{\partial u(t,a)}{\partial a} = -\tau\sigma u(t,a),\\
\frac{dV(t)}{dt}=\tau\left[rV(t)\left(1-\frac{V(t)}{K}\right)-\frac{\alpha V(t)}{m\int_{0}^{+\infty}{u(t,a)da}+V(t)}\int_{0}^{+\infty}{u(t,a)da}\right],\\
u(t,0)=\tau\left[\eta\frac{\alpha V(t)}{m\int_{0}^{+\infty}{u(t,a)da}+V(t)}\int_{0}^{+\infty}{\beta(a)u(t,a)da}\right],\quad t>0,\\
u(0,\cdot)=u_{0}\in L_{+} ^{1}((0, + \infty ),\mathbb{R}),\quad V(0)=V_{0}\geq0,
\end{array}
\right.
\end{equation}
where the new function $\beta(a)$ is given by
\begin{equation*}
  \beta(a)=\left\{
             \begin{array}{cc}
               \beta^{*}, & \quad \mbox{if} \quad a \geq 1,\\
               0, & \quad \mbox{otherwise}, \\
             \end{array}
           \right.
\end{equation*}
and
\begin{equation*}
  \int_{\tau}^{+\infty}{\beta^{*}e^{-\sigma a}da}=1\Leftrightarrow \beta^{*}=\sigma e^{\sigma \tau},
\end{equation*}
where $\tau\geq0$, $\beta^{*}>0$.
\noindent

With the notation $V(t):=\int_{0}^{+\infty}{v(t,a)da}$ in (\ref{newsystem}), the ordinary differential equation in (\ref{newsystem}) can be rewritten as the following age-structured model
\begin{equation*}
  \left\{
     \begin{array}{ll}
       \frac{\partial v(t,a)}{\partial t}+\frac{\partial v(t,a)}{\partial a} =-\tau\mu v(t,a),\\
       v(t,0)= \tau G(u(t,a),v(t,a)), \\
       v(0,a)=v_{0}\in L^{1}((0,+\infty),\mathbb{R}), \\
     \end{array}
   \right.
\end{equation*}
where
\begin{equation*}
\begin{array}{ccl}
  G(u(t,a),v(t,a)) & = & \Lambda\int_{0}^{+\infty}{v(t,a)da}-\frac{r}{K}\left(\int_{0}^{+\infty}{v(t,a)da}\right)^{2}
    -\frac{\alpha\int_{0}^{+\infty}{u(t,a)da}\int_{0}^{+\infty}{v(t,a)da}}{m\int_{0}^{+\infty}{u(t,a)da}+\int_{0}^{+\infty}{v(t,a)da}}.  \\
\end{array}
\end{equation*}
Let $w(t,a)=\left(
                           \begin{array}{c}
                             u(t,a) \\
                             v(t,a) \\
                           \end{array}
                         \right)
$, we can further obtain the equivalent system of model (\ref{system})
\begin{equation}\label{systempartialwta}
\left\{
  \begin{array}{l}
    \frac{\partial w(t,a)}{\partial t}+\frac{\partial w(t,a)}{\partial a} =-\tau Q w(t,a), \\
    w(t,0)=\tau B(w(t,a)), \\
    w(0,\cdot)=w_{0}=\left(
                       \begin{array}{c}
                         u_{0} \\
                        v_{0} \\
                       \end{array}
                     \right)\in L^{1}((0,+\infty),\mathbb{R}^{2}),
     \\
  \end{array}
\right.
\end{equation}
where
\begin{equation*}
  \begin{array}{ccc}
    Q=\left(
        \begin{array}{cc}
          \sigma & 0 \\
          0 & \mu \\
        \end{array}
      \right)
     & \mbox{and} & B(w(t,a))=\left(
                                   \begin{array}{c}
                                     \frac{\eta\alpha\int_{0}^{+\infty}{v(t,a)da} \int_{0}^{+\infty}{\beta(a)u(t,a)da}}
                                     {m\int_{0}^{+\infty}{u(t,a)da}+\int_{0}^{+\infty}{v(t,a)da}} \\
                                     G(u(t,a),v(t,a)) \\
                                   \end{array}
                                 \right).\\
  \end{array}
\end{equation*}
\noindent
Next we consider the following Banach space
\begin{equation*}
  X={\mathbb{R}}^{2} \times L^{1}{((0,+\infty),{\mathbb{R}}^{2})}
\end{equation*}
with $\left \|
\left(
  \begin{array}{c}
    \alpha \\
    \psi\\
  \end{array}
\right)
     \right \|
     =\left \|\alpha\right\|_{{\mathbb{R}}^{2}}+\left\|\psi\right\|_{L^{1}{((0,+\infty),{\mathbb{R}}^{2})}}$.
Define the linear operator $A_{\tau} : D(A_{\tau})\rightarrow X$ by
\begin{equation*}
  A_{\tau}\left(
     \begin{array}{c}
       0_{\mathbb{R}^{2}} \\
       \varphi \\
     \end{array}
   \right)
   =\left(
   \begin{array}{c}
     -\varphi(0) \\
     -\varphi'-\tau Q\varphi \\
   \end{array}
 \right)
\end{equation*}
with $D(A_{\tau})=\{0_{\mathbb{R}^{2}}\}\times W^{1,1}({(0,+\infty),{\mathbb{\mathbb{R}}}^{2}}) \subset X$, and the operator $H: \overline{D(A_{\tau})} \rightarrow X$ by
\begin{equation*}
  H\left(
  \left(
     \begin{array}{c}
       0_{\mathbb{R}^{2}} \\
       \varphi \\
     \end{array}
   \right)
   \right)
   =\left(
      \begin{array}{c}
        B(\varphi) \\
        0_{L^{1}} \\
      \end{array}
    \right).
\end{equation*}
The linear operator $A_{\tau}$ is non-densely defined because
\begin{equation*}
  X_{0}:=\overline{D(A_{\tau})}=\{0_{\mathbb{R}^{2}}\} \times L^{1}{((0,+\infty),{\mathbb{R}}^{2})}\neq X.
\end{equation*}
Set
\begin{equation*}
  x(t)=\left(
         \begin{array}{c}
           0_{\mathbb{R}^{2}} \\
           w(t,\cdot) \\
         \end{array}
       \right),
\end{equation*}
system (\ref{systempartialwta}) can be further rewritten as the following non-densely defined abstract Cauchy problem
\begin{equation}\label{nonddaCp}
  \left\{
    \begin{array}{l}
      \frac{dx(t)}{dt}  =  A_{\tau}x(t)+\tau H(x(t)), t\geq0, \\
      x(0)  =  \left(
                   \begin{array}{c}
                     0_{\mathbb{R}^{2}} \\
                     w_0 \\
                   \end{array}
                 \right)
                 \in \overline{D(A_{\tau})}. \\
    \end{array}
  \right.
\end{equation}
The global existence and uniqueness of solution of system (\ref{nonddaCp}) follow from the results of \cite{MagalRuan-ADE-2009} and \cite{Magal-EJDE-2001}.
\subsection{Equilibria and linearized equation}
\noindent

In this subsection, we will obtain the equilibria of system (\ref{nonddaCp}) and linearized equation of (\ref{nonddaCp}) around the positive equilibrium.
\subsubsection{Existence of equilibria}

\noindent

Suppose that $\overline{x}(a)=\left(
                      \begin{array}{c}
                        0_{\mathbb{R}^{2}} \\
                        \overline{w}(a) \\
                      \end{array}
                    \right)
                    \in X_0
$ is a steady state of system (\ref{nonddaCp}). Then
\begin{equation*}
  \left(
     \begin{array}{c}
       0_{\mathbb{R}^{2}} \\
       \overline{w}(a) \\
     \end{array}
   \right)\in D(A_{\tau}) \quad \mbox{and} \quad
   A_{\tau}\left(
      \begin{array}{c}
        0_{\mathbb{R}^{2}} \\
        \overline{w}(a) \\
      \end{array}
    \right)+\tau H\left(\left(
               \begin{array}{c}
                 0_{\mathbb{R}^{2}} \\
                 \overline{w}(a) \\
               \end{array}
             \right)\right)=0,
\end{equation*}
which is equivalent to
\begin{equation*}
\left\{
  \begin{array}{l}
    -\overline{w}(0)+\tau B(\overline{w}(a))=0, \\
    -\overline{w}^{'}(a)-\tau Q\overline{w}(a)=0. \\
  \end{array}
\right.
\end{equation*}
Moreover, we obtain
\begin{equation}\label{overlinewa}
  \left.
    \begin{array}{ccccc}
      \overline{w}(a)  = \left(
                              \begin{array}{c}
                                \overline{u}(a) \\
                                 \overline{v}(a) \\
                              \end{array}
                            \right)
        = \left(
               \begin{array}{c}
                 \tau\frac{\eta\alpha \overline{V}\int_{0}^{+\infty}{\beta(a)\overline{u}(a)da}}
                                     {m\int_{0}^{+\infty}{\overline{u}(a)da}+\overline{V}}
                                      e^{-\tau\sigma a}\\
                  \tau \left(
                  \Lambda\overline{V}-\frac{r}{K}\overline{V}^{2}
       -\frac{\alpha\overline{V}\int_{0}^{+\infty}{\overline{u}(a)da}}{m\int_{0}^{+\infty}{\overline{u}(a)da}+\overline{V}}
                  \right)e^{-\tau \mu a} \\
               \end{array}
             \right)
        \\
    \end{array}
  \right.
\end{equation}
with $\overline{V}=\int_{0}^{+\infty}{\overline{v}(a)da}$.

According to the first equation of (\ref{overlinewa}), we have
\begin{equation*}
\int_{0}^{+\infty}{\beta(a)\overline{u}(a)da}=\frac{\eta\alpha \overline{V}\int_{0}^{+\infty}{\beta(a)\overline{u}(a)da}}
                                     {m\int_{0}^{+\infty}{\overline{u}(a)da}+\overline{V}}
\quad\mbox{and}\quad  \int_{0}^{+\infty}{\overline{u}(a)da}=\frac{1}{\sigma}\int_{0}^{+\infty}{\beta(a)\overline{u}(a)da}.
\end{equation*}
On account of the second equation of (\ref{overlinewa}), we get
\begin{equation*}
  r\overline{V}-\frac{r}{K}\overline{V}^{2}
       -\frac{\alpha\overline{V}\int_{0}^{+\infty}{\overline{u}(a)da}}{m\int_{0}^{+\infty}{\overline{u}(a)da}+\overline{V}}=0.\\
\end{equation*}
Hence, we have the following lemma.
\begin{lemma}
System (\ref{nonddaCp}) has always the  equilibrium
  \begin{equation*}
\overline{x}_{0}(a)=\left(
                        \begin{array}{c}
                          0_{\mathbb{R}^{2}} \\
                          \left(
                            \begin{array}{c}
                              0_{L^{1}} \\
                              \tau\mu Ke^{-\tau\mu a} \\
                            \end{array}
                          \right)
                           \\
                        \end{array}
                      \right).
\end{equation*}
Furthermore, there exists a unique positive equilibrium of system (\ref{nonddaCp})
\begin{equation*}
\overline{x}_{\tau}=\left(
                  \begin{array}{c}
                    0_{\mathbb{R}^{2}} \\
                    \overline{w}_{\tau} \\
                  \end{array}
                \right)=\left(
  \begin{array}{c}
    0_{\mathbb{R}^{2}} \\
    \left(
      \begin{array}{c}
        \frac{K\sigma(\alpha\eta-1)[ m r\eta-(\alpha\eta-1)]}{ m^{2}r\eta}\tau e^{-\tau\sigma a} \\
        \frac{\mu K[ m r\eta-(\alpha\eta-1)]}{ m r\eta}\tau e^{-\tau\mu a} \\
      \end{array}
    \right)
  \end{array}
\right),
\end{equation*}
if and only if
\begin{equation*}
 m r\eta >\alpha\eta-1>0.
\end{equation*}
Correspondingly, there exists a unique positive equilibrium of system (\ref{system})
\begin{equation*}
  \left(
     \begin{array}{c}
      \overline{u}_{\tau}(a) \\
       \overline{V} \\
     \end{array}
   \right)=
   \left(
     \begin{array}{c}
       \frac{K\sigma(\alpha\eta-1)[ m r\eta-(\alpha\eta-1)]}{ m^{2}r\eta}\tau e^{-\tau\sigma a} \\
       \frac{K[ m r\eta-(\alpha\eta-1)]}{ m r\eta} \\
     \end{array}
   \right)
\end{equation*}
if and only if
\begin{equation*}
  m r\eta >\alpha\eta-1>0.
\end{equation*}
\end{lemma}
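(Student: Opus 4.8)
The plan is to reduce the fixed-point equation $A_{\tau}\overline{x}+\tau H(\overline{x})=0$, which the excerpt has already converted into the profile formulas \eqref{overlinewa} together with the two scalar relations displayed just before the lemma, to a finite algebraic system in the two aggregate quantities $P:=\int_{0}^{+\infty}\overline{u}(a)\,da$ and $\overline{V}:=\int_{0}^{+\infty}\overline{v}(a)\,da$, and then to solve that system explicitly while tracking the signs that force positivity. Uniqueness will then be automatic, since each algebraic step pins down $P$ and $\overline{V}$ without choice.

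First I would record the two structural identities that the exponential profiles in \eqref{overlinewa} produce. Integrating $\overline{u}(a)=\overline{u}(0)e^{-\tau\sigma a}$ gives $\overline{u}(0)=\tau\sigma P$, and the normalization $\beta^{*}=\sigma e^{\sigma\tau}$ from Assumption \ref{assumption1} yields $\int_{0}^{+\infty}\beta(a)\overline{u}(a)\,da=\sigma P$, which is exactly the relation $\int_{0}^{+\infty}\overline{u}(a)\,da=\frac{1}{\sigma}\int_{0}^{+\infty}\beta(a)\overline{u}(a)\,da$. Likewise, integrating the $v$-profile gives $\overline{v}(0)=\tau\mu\overline{V}$, and equating this with the boundary value $\overline{v}(0)=\tau G$ and using $r=\Lambda-\mu$ converts the $\Lambda$ appearing in $G$ into the net rate, producing the second scalar equation $r\overline{V}-\frac{r}{K}\overline{V}^{2}-\frac{\alpha P\overline{V}}{mP+\overline{V}}=0$ exactly as stated.

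Then I would split into two cases according to whether the predator is present. If $\int_{0}^{+\infty}\beta(a)\overline{u}(a)\,da=0$ then $P=0$, the first relation holds trivially, and the second collapses to $r\overline{V}\bigl(1-\overline{V}/K\bigr)=0$; the nondegenerate nonnegative root $\overline{V}=K$ gives $\overline{v}(0)=\tau\mu K$, producing the predator-free equilibrium $\overline{x}_{0}$, valid for all parameter values. If instead $\int_{0}^{+\infty}\beta(a)\overline{u}(a)\,da\neq 0$, I would cancel this common factor in the first relation to get $mP+\overline{V}=\eta\alpha\overline{V}$, hence $P=\frac{\alpha\eta-1}{m}\,\overline{V}$, which is positive precisely when $\alpha\eta-1>0$. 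Substituting both $mP+\overline{V}=\eta\alpha\overline{V}$ and this value of $P$ into the second relation makes the predation term collapse to the constant $\frac{\alpha\eta-1}{m\eta}$, so the equation becomes linear in $\overline{V}$ and solves uniquely to $\overline{V}=\frac{K[mr\eta-(\alpha\eta-1)]}{mr\eta}$; positivity of $\overline{V}$, and hence of $P$ after back-substitution, holds exactly when $mr\eta>\alpha\eta-1$. Reinserting these aggregates into \eqref{overlinewa} via $\overline{u}(0)=\tau\sigma P$ and $\overline{v}(0)=\tau\mu\overline{V}$ reproduces the stated closed forms for $\overline{w}_{\tau}$.

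The computation is essentially algebraic, so there is no deep obstacle; the only points that need care are the two ``glue'' steps where the age structure is integrated out — deriving $\int_{0}^{+\infty}\beta(a)\overline{u}(a)\,da=\sigma P$ from the normalization of $\beta^{*}$, and turning the $\Lambda$ in $G$ into the net rate $r=\Lambda-\mu$ through the self-consistency relation $\overline{v}(0)=\tau\mu\overline{V}$ — since a sign or bookkeeping slip there would corrupt both the explicit profiles and the threshold conditions $mr\eta>\alpha\eta-1>0$. Finally, the corresponding statement for the original system \eqref{system} follows immediately by reading off the prey total $\overline{V}$ and the predator density $\overline{u}_{\tau}(a)$ from the rescaled profiles.
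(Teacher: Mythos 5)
Your proposal is correct and follows essentially the same route as the paper: the paper likewise derives the exponential profiles \eqref{overlinewa}, integrates out the age variable to obtain the two scalar relations in $\int_{0}^{+\infty}\overline{u}(a)\,da$ and $\overline{V}$, and then solves that algebraic system (with the same cancellation $m\int_{0}^{+\infty}\overline{u}(a)\,da+\overline{V}=\alpha\eta\overline{V}$) to get the explicit equilibria and the threshold $mr\eta>\alpha\eta-1>0$. Your write-up merely makes explicit the case split and the back-substitution steps that the paper leaves implicit, so there is nothing to flag.
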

In the remainder of our paper we assume that $m r\eta >\alpha\eta-1>0$.

\subsubsection{Linearized equation}

\noindent

In order to get the linearized equation of (\ref{nonddaCp}) around the positive equilibrium $\overline{x}_{\tau}$, we first apply the following change of variable
\begin{equation*}
  y(t):= x(t)-\overline{x}_{\tau}.
\end{equation*}
Then, (\ref{nonddaCp}) becomes
\begin{equation}\label{system4}
  \left\{
    \begin{array}{cll}
      \frac{dy(t)}{dt} & = & A_{\tau}y(t)+\tau H(y(t)+\overline{x}_{\tau})-\tau H(\overline{x}_{\tau}), t\geq0, \\
      y(0) & = & \left(
                   \begin{array}{c}
                     0_{\mathbb{R}^{2}} \\
                     w_0-\overline{w}_{\tau} \\
                   \end{array}
                 \right)
                 =: y_0\in \overline{D(A_{\tau})}.
       \\
    \end{array}
  \right.
\end{equation}
Therefore the linearized equation (\ref{system4}) around the equilibrium $0$ is given by
\begin{equation}\label{systemlinear}
  \begin{array}{cc}
    \frac{dy(t)}{dt}=A_{\tau}y(t)+\tau DH(\overline{x}_{\tau})y(t) & \quad\mbox{for}\quad t\geq 0, y(t)\in X_{0}, \\
  \end{array}
\end{equation}
where
\begin{equation*}
    \begin{array}{cc}
      \tau DH(\overline{x}_{\tau})\left(
                        \begin{array}{c}
                          0_{\mathbb{R}^{2}} \\
                          \varphi \\
                        \end{array}
                      \right)=\left(
                                \begin{array}{c}
                                 \tau DB(\overline{w}_{\tau})(\varphi) \\
                                  0_{L^{1}} \\
                                \end{array}
                              \right)
       &\quad \mbox{for all}\quad\left(
                         \begin{array}{c}
                           0_{\mathbb{R}^{2}} \\
                           \varphi \\
                         \end{array}
                       \right)\in D(A_{\tau})\\
    \end{array}
\end{equation*}
with
\begin{equation*}
     \begin{array}{ccl}
       &&DB(\overline{w}_{\tau})(\varphi)=\\
       &&\left(
       \begin{array}{cc}
       -\frac{m\alpha\eta\overline{V}\int_{0}^{+\infty}{\beta(a)\overline{u}(a)da}}{\left(m\int_{0}^{+\infty}{\overline{u}(a)da}+\overline{V}\right)^{2}} & \frac{\alpha\eta\int_{0}^{+\infty}{\beta(a)\overline{u}(a)da}}{m\int_{0}^{+\infty}{\overline{u}(a)da}+\overline{V}}
       -\frac{\alpha\eta\overline{V}\int_{0}^{+\infty}{\beta(a)\overline{u}(a)da}}{\left(m\int_{0}^{+\infty}{\overline{u}(a)da}+\overline{V}\right)^{2}}\\
       -\frac{\alpha\overline{V}}{m\int_{0}^{+\infty}{\overline{u}(a)da}+\overline{V}}
       +\frac{\alpha m\overline{V}\int_{0}^{+\infty}{\overline{u}(a)da}}{\left(m\int_{0}^{+\infty}{\overline{u}(a)da}+\overline{V}\right)^{2}} & \Lambda-\frac{2r}{K}\overline{V}
       -\frac{\alpha\int_{0}^{+\infty}{\overline{u}(a)da}}{m\int_{0}^{+\infty}{\overline{u}(a)da}+\overline{V}}
       +\frac{\alpha\overline{V}\int_{0}^{+\infty}{\overline{u}(a)da}}
       {\left(m\int_{0}^{+\infty}{\overline{u}(a)da}+\overline{V}\right)^{2}} \\
       \end{array}
       \right)\\
       &&\times\int_{0}^{+\infty}{\varphi(a)da}
         +\left(
         \begin{array}{cc}
         \frac{\alpha\eta\overline{V}}{m\int_{0}^{+\infty}{\overline{u}(a)da}+\overline{V}}  & 0_{\mathbb{R}} \\
         0_{\mathbb{R}} & 0_{\mathbb{R}} \\
         \end{array}
         \right)
        \int_{0}^{+\infty}{\beta(a)\varphi(a)da}. \\
     \end{array}
\end{equation*}
Then we can rewrite system (\ref{system4}) as
\begin{equation}\label{fracdytdt}
    \begin{array}{cc}
      \frac{dy(t)}{dt}=B_{\tau}y(t)+\mathcal{H}(y(t)) &\quad \mbox{for}\quad t\geq0, \\
    \end{array}
\end{equation}
where
\begin{equation*}
  B_{\tau}:=A_{\tau}+\tau DH(\overline{x}_{\tau})
\end{equation*}
is a linear operator and
\begin{equation*}
  \mathcal{H}(y(t))=\tau H(y(t)+\overline{x}_{\tau})-\tau H(\overline{x}_{\tau})-\tau DH(\overline{x}_{\tau})y(t)
\end{equation*}
satisfying $\mathcal{H}(0)=0$ and $D\mathcal{H}(0)=0$.

\subsection{Characteristic equation}

\noindent

In this subsection, we will get the characteristic equation of (\ref{nonddaCp}) around the positive equilibrium $\overline{x}_{\tau}$.
Denote
\begin{equation*}
  \nu:=\min\{\sigma,\mu\}>0 \quad \mbox{and} \quad \Omega := \{\lambda \in \mathbb{C} : Re(\lambda)>-\nu\tau\}.
\end{equation*}
Following the results of \cite{LiuMagalRuan-ZAMP-2011}, we derive the following lemma.
\begin{lemma}
For $\lambda\in \Omega$, $\lambda\in \rho(A_{\tau})$ and
\begin{equation}\label{lambdaIAtau1}
  (\lambda I-A_{\tau})^{-1}\left(
                               \begin{array}{c}
                                 \delta \\
                                 \psi\\
                               \end{array}
                             \right)
                             =\left(
                                \begin{array}{c}
                                  0_{\mathbb{R}^{2}} \\
                                  \varphi \\
                                \end{array}
                              \right)
                              \Leftrightarrow
                              \varphi(a)=e^{-\int_{0}^{a}{(\lambda I+\tau Q)dl}}\delta+\int_{0}^{a}{e^{-\int_{s}^{a}{(\lambda I+\tau Q)dl}}\psi(s)}ds
\end{equation}
with $\left(
        \begin{array}{c}
          \delta \\
          \psi \\
        \end{array}
      \right)
      \in X
$ and $\left(
         \begin{array}{c}
           0_{\mathbb{R}^{2}} \\
           \varphi \\
         \end{array}
       \right)
       \in D(A_{\tau})
$. Furthermore, $A_{\tau}$ is a Hille-Yosida operator and
\begin{equation}\label{Hille-Yosida}
  \left\|(\lambda I-A_{\tau})^{-n}\right\|\leq\frac{1}{(Re(\lambda)+\nu\tau)^{n}},\forall\lambda\in\Omega,\forall n\geq 1.
\end{equation}
\end{lemma}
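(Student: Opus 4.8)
The plan is to solve the resolvent equation $(\lambda I-A_{\tau})(0_{\mathbb{R}^2},\varphi)^{T}=(\delta,\psi)^{T}$ explicitly, read off formula (\ref{lambdaIAtau1}), and then extract the Hille--Yosida bound (\ref{Hille-Yosida}) from the convolution structure of the resolvent. First I would fix $\lambda\in\Omega$ and $(\delta,\psi)^{T}\in X$ and look for $(0_{\mathbb{R}^2},\varphi)^{T}\in D(A_{\tau})$. Unwinding the definition of $A_{\tau}$, the equation is equivalent to the boundary condition $\varphi(0)=\delta$ together with the first-order linear system $\varphi'(a)+(\lambda I+\tau Q)\varphi(a)=\psi(a)$. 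Since $Q=\mathrm{diag}(\sigma,\mu)$ is diagonal, the integrating factor $e^{\int_{0}^{a}(\lambda I+\tau Q)\,dl}=\mathrm{diag}(e^{(\lambda+\tau\sigma)a},e^{(\lambda+\tau\mu)a})$ is explicit, and variation of constants produces exactly (\ref{lambdaIAtau1}). The hypothesis $\lambda\in\Omega$ enters precisely here: $Re(\lambda)>-\nu\tau=-\min\{\sigma,\mu\}\tau$ forces both $Re(\lambda)+\tau\sigma>0$ and $Re(\lambda)+\tau\mu>0$, so the two exponential factors decay as $a\to+\infty$, the integral converges, and one checks directly from the equation that $\varphi\in L^{1}$ with $\varphi'\in L^{1}$, i.e. $(0_{\mathbb{R}^2},\varphi)^{T}\in D(A_{\tau})$. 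Uniqueness being immediate from the ODE, this shows $\lambda I-A_{\tau}$ is a bijection and hence $\lambda\in\rho(A_{\tau})$.

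For the estimate (\ref{Hille-Yosida}) I would exploit that the range of $(\lambda I-A_{\tau})^{-1}$ lies in $X_{0}=\{0_{\mathbb{R}^2}\}\times L^{1}$, so every application after the first acts on a vector with vanishing $\mathbb{R}^{2}$-component. On such inputs the resolvent reduces to the matrix convolution $(\lambda I-A_{\tau})^{-1}(0,\psi)^{T}=(0,k_{\lambda}*\psi)^{T}$ with kernel $k_{\lambda}(a)=e^{-(\lambda I+\tau Q)a}$. Because $k_{\lambda}$ is diagonal, the $n$-fold convolution is explicit, $k_{\lambda}^{*n}(a)=\frac{a^{n-1}}{(n-1)!}e^{-(\lambda I+\tau Q)a}$, and iterating the resolvent yields $(\lambda I-A_{\tau})^{-n}(\delta,\psi)^{T}=(0,\,k_{\lambda}^{*n}(\cdot)\delta+k_{\lambda}^{*n}*\psi)^{T}$.

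It then remains to estimate in the $X$-norm. The operator norm of the diagonal matrix $k_{\lambda}^{*n}(a)$ equals $\frac{a^{n-1}}{(n-1)!}e^{-(Re(\lambda)+\nu\tau)a}$, since $\nu=\min\{\sigma,\mu\}$ selects the slowest-decaying diagonal entry; integrating, $\int_{0}^{+\infty}\frac{a^{n-1}}{(n-1)!}e^{-(Re(\lambda)+\nu\tau)a}\,da=(Re(\lambda)+\nu\tau)^{-n}$ bounds the $\delta$-contribution, while Young's convolution inequality bounds the $\psi$-contribution by the same factor times $\|\psi\|_{L^{1}}$. Adding the two and recalling $\|(\delta,\psi)^{T}\|=\|\delta\|_{\mathbb{R}^{2}}+\|\psi\|_{L^{1}}$ gives precisely (\ref{Hille-Yosida}). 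The main obstacle is this $n$-th power bound: one must recognize the convolution (semigroup) structure of the resolvent on $X_{0}$, compute $k_{\lambda}^{*n}$ correctly, and check that the diagonal operator-norm bound interacts with the $L^{1}$-integral so that the constant $M=1$ is achieved rather than a larger one; the earlier resolvent formula and the $n=1$ case are then routine consequences of the same computation.
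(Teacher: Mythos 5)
Your proof is correct, and it is worth noting that the paper itself contains no proof of this lemma at all: it simply writes ``Following the results of \cite{LiuMagalRuan-ZAMP-2011}, we derive the following lemma,'' deferring the entire argument to that reference. Your blind attempt therefore supplies exactly the computation that the citation stands in for, and it is the standard one for age-structured operators of this type: unwinding $A_{\tau}$ to get the boundary condition $\varphi(0)=\delta$ plus the ODE $\varphi'+(\lambda I+\tau Q)\varphi=\psi$, variation of constants for (\ref{lambdaIAtau1}), the observation that $Re(\lambda)+\nu\tau>0$ makes both exponential rates decay so that $\varphi\in W^{1,1}$, and the convolution structure of the resolvent on $X_{0}$ for the norm estimates. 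One simplification you missed: the $n$-fold convolution computation is superfluous. Your own $n=1$ estimate already yields $\bigl\|(\lambda I-A_{\tau})^{-1}\bigr\|\leq (Re(\lambda)+\nu\tau)^{-1}$ with constant exactly $1$, because $\|k_{\lambda}(\cdot)\delta\|_{L^{1}}\leq\|\delta\|_{\mathbb{R}^{2}}\int_{0}^{+\infty}e^{-(Re(\lambda)+\nu\tau)a}\,da$ and Young's inequality handles $k_{\lambda}\ast\psi$; the bound (\ref{Hille-Yosida}) for all $n\geq 1$ then follows in one line from submultiplicativity of the operator norm, $\|(\lambda I-A_{\tau})^{-n}\|\leq\|(\lambda I-A_{\tau})^{-1}\|^{n}$. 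The explicit kernel $k_{\lambda}^{\ast n}(a)=\frac{a^{n-1}}{(n-1)!}e^{-(\lambda I+\tau Q)a}$ and the Gamma integral are correct as you state them, but they buy nothing here; that machinery is only needed when the sharp constant at $n=1$ is some $M>1$, in which case iterating the crude bound loses information. Everything else checks out: injectivity is immediate from the homogeneous ODE, the explicit solution gives surjectivity onto $D(A_{\tau})$, and boundedness of the inverse (hence $\lambda\in\rho(A_{\tau})$, with closedness of $A_{\tau}$ following automatically from the existence of a bounded everywhere-defined inverse) is exactly your $n=1$ estimate.
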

\noindent
Let $A_0$ be the part of $A_{\tau}$ in $\overline{D(A_{\tau})}$, namely, $A_0 := D(A_0)\subset X \rightarrow X$. For $\left(
                                                                                                    \begin{array}{c}
                                                                                                      0_{\mathbb{R}^{2}} \\
                                                                                                      \varphi \\
                                                                                                    \end{array}
                                                                                                  \right)
                                                                                                  \in D(A_0)
$, we get
\begin{equation*}
  A_0\left(
       \begin{array}{c}
         0_{\mathbb{R}^{2}} \\
         \varphi \\
       \end{array}
     \right)
     =\left(
        \begin{array}{c}
          0_{\mathbb{R}^{2}} \\
          \hat{A_0}(\varphi) \\
        \end{array}
      \right),
\end{equation*}
where $\hat{A_0}(\varphi)=-\varphi '-\tau Q\varphi$ with $D(\hat{A_0})=\{\varphi \in W^{1,1}((0,+\infty),{\mathbb{R}}^{2}): \varphi(0)=0\}$.
\noindent

Note that $\tau DH(\overline{v}_{\tau}):D(A_{\tau}) \subset X \rightarrow X$ is a compact bounded linear operator. From (\ref{Hille-Yosida}) we obtain
\begin{equation*}
  \left\| T_{A_0}(t) \right\| \leq e^{-\nu\tau t} \quad \mbox{for} \quad t \geq 0.
\end{equation*}
Thus, we have
\begin{equation*}
  \omega_{0,ess}(A_0)\leq\omega_0(A_{0})\leq -\nu\tau.
\end{equation*}
Combining with the perturbation results from \cite{DucrotLiuMagal-JMAA-2008}, we get
\begin{equation*}
  \omega_{0,ess}((A_{\tau}+\tau DH(\overline{x}_{\tau}))_{0})\leq-\nu\tau<0.
\end{equation*}
Consequently we derive the following proposition.
\begin{lemma}
The linear operator $B_{\tau}$ is a Hille-Yosida operator, and its part $(B_{\tau})_{0}$ in
$\overline{D(B_{\tau})}$ satisfies
\begin{equation*}
  \omega_{0,ess}((B_{\tau})_{0})<0.
\end{equation*}
\end{lemma}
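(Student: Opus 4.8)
The plan is to treat $B_\tau$ as a perturbation of the Hille--Yosida operator $A_\tau$ by the bounded linear operator $L := \tau DH(\overline{x}_\tau)$, and to read off both conclusions from perturbation theory, exploiting the fact that $L$ is not merely bounded but compact. Two separate statements must be verified: that $B_\tau$ is a Hille--Yosida operator, and that its part $(B_\tau)_0$ has negative essential growth bound.

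For the Hille--Yosida property, I would invoke the resolvent estimate (\ref{Hille-Yosida}) from the previous lemma, which exhibits $A_\tau$ as a Hille--Yosida operator with constant $M=1$ and type $-\nu\tau$. Since $L$ is a bounded linear operator on $X$, the standard bounded-perturbation theorem for Hille--Yosida operators in the non-densely defined setting (the Magal--Ruan framework underlying \cite{MagalRuan-ADE-2009}) gives immediately that $B_\tau = A_\tau + L$ is again a Hille--Yosida operator, with some constants $M' \ge 1$ and type $\omega'$; the precise values are irrelevant here.

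For the essential growth bound, the first step is to record that $L$ is compact. This follows from its explicit form: the map $\varphi \mapsto DB(\overline{w}_\tau)(\varphi)$ factors through the two bounded linear functionals $\varphi \mapsto \int_0^{+\infty}\varphi(a)\,da$ and $\varphi \mapsto \int_0^{+\infty}\beta(a)\varphi(a)\,da$, so $L$ maps $X$ into the finite-dimensional subspace $\mathbb{R}^2 \times \{0_{L^1}\}$ and is therefore of finite rank, hence compact. The second step uses the growth estimate $\|T_{A_0}(t)\| \le e^{-\nu\tau t}$ recorded above, which yields $\omega_{0,ess}(A_0) \le \omega_0(A_0) \le -\nu\tau$. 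Finally, because the essential growth bound is invariant under compact perturbations (the perturbation results of \cite{DucrotLiuMagal-JMAA-2008}), I conclude $\omega_{0,ess}((B_\tau)_0) = \omega_{0,ess}((A_\tau + L)_0) \le \omega_{0,ess}(A_0) \le -\nu\tau < 0$.

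The genuinely substantive point is the compactness of $L$; once it is identified as finite-rank through those two integral functionals, everything else is a direct citation of the non-densely-defined perturbation theory. The only subtlety I would watch is that both perturbation theorems must be applied in the integrated-semigroup (non-dense domain) setting rather than the classical $C_0$-semigroup setting, which is precisely why the results of \cite{MagalRuan-ADE-2009} and \cite{DucrotLiuMagal-JMAA-2008} are the appropriate tools; no new estimate beyond (\ref{Hille-Yosida}) and the bound on $\|T_{A_0}(t)\|$ is required.
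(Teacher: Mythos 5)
Your proposal is correct and follows essentially the same route as the paper: bounded perturbation of the Hille--Yosida operator $A_\tau$ for the first claim, then the estimate $\omega_{0,ess}(A_0)\leq\omega_0(A_0)\leq-\nu\tau$ combined with the compact-perturbation result of \cite{DucrotLiuMagal-JMAA-2008} for the second. The only difference is that you justify the compactness of $\tau DH(\overline{x}_\tau)$ explicitly (finite rank, since $DB(\overline{w}_\tau)$ factors through the two integral functionals into $\mathbb{R}^2\times\{0_{L^1}\}$), a detail the paper merely asserts.
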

\noindent
Let $\lambda\in \Omega$. Since $(\lambda I-A_{\tau})$ is invertible, and
\begin{equation}\label{invertible}
  \begin{array}{ccl}
    (\lambda I-B_{\tau})^{-1} & = & (\lambda I-(A_{\tau}+\tau DH(\overline{x}_{\tau})))^{-1} \\
     & = & (\lambda I-A_{\tau})^{-1}(I-\tau DH(\overline{x}_{\tau})(\lambda I-A_{\tau})^{-1})^{-1}, \\
  \end{array}
\end{equation}
it follows that $\lambda I-B_{\tau}$ is invertible if and only if $I-\tau DH(\overline{x}_{\tau})(\lambda I-A_{\tau})^{-1}$ is invertible. Set
\begin{equation*}
  (I-\tau DH(\overline{x}_{\tau})(\lambda I-A_{\tau})^{-1})\left(
     \begin{array}{c}
       \delta \\
       \varphi \\
     \end{array}
   \right)
   =\left(
      \begin{array}{c}
        \gamma \\
        \psi \\
      \end{array}
    \right).
\end{equation*}
It follows that
\begin{equation*}
  \left(
    \begin{array}{l}
      \delta \\
      \varphi \\
    \end{array}
  \right)
  -\tau DH(\overline{x}_{\tau})(\lambda I-A_{\tau})^{-1}\left(
                                                \begin{array}{c}
                                                  \delta \\
                                                  \varphi \\
                                                \end{array}
                                              \right)
                                              =\left(
                                                 \begin{array}{c}
                                                   \gamma \\
                                                   \psi \\
                                                 \end{array}
                                               \right).
\end{equation*}
Then we obtain
\begin{equation*}
 \left\{
    \begin{array}{l}
      \delta-\tau DB(\overline{w}_{\tau})\left(e^{-\int_{0}^{a}{(\lambda I+\tau Q)dl}}\delta+\int_{0}^{a}{e^{-\int_{s}^{a}{(\lambda I+\tau Q)dl}}\varphi(s)}ds\right)=\gamma, \\
      \varphi=\psi, \\
    \end{array}
  \right.
\end{equation*}
i.e.,
\begin{equation*}
  \left\{
    \begin{array}{l}
 \delta-\tau DB(\overline{w}_{\tau})\left(e^{  -\int_{0}^{a}{(\lambda I+\tau Q)dl}}\delta\right)=\gamma+\tau DB(\overline{w}_{\tau})\left(\int_{0}^{a}{e^{-\int_{s}^{a}{(\lambda I+\tau Q)dl}}\varphi(s)}ds\right), \\
      \varphi=\psi. \\
    \end{array}
  \right.
\end{equation*}
Taking the formula of $DB(\overline{w_{\tau}})$ into consideration, we obtain
\begin{equation*}
  \left\{
    \begin{array}{l}
      \Delta(\lambda)\delta=\gamma+K(\lambda,\psi), \\
      \varphi=\psi, \\
    \end{array}
  \right.
\end{equation*}
where
\begin{equation}\label{Deltalambda}
   \begin{array}{ccl}
    && \Delta(\lambda)  =  \\
     &&I-\left(
       \begin{array}{cc}
       -\frac{m\alpha\eta\overline{V}\int_{0}^{+\infty}{\beta(a)\overline{u}(a)da}}{\left(m\int_{0}^{+\infty}{\overline{u}(a)da}+\overline{V}\right)^{2}} & \frac{\alpha\eta\int_{0}^{+\infty}{\beta(a)\overline{u}(a)da}}{m\int_{0}^{+\infty}{\overline{u}(a)da}+\overline{V}}
       -\frac{\alpha\eta\overline{V}\int_{0}^{+\infty}{\beta(a)\overline{u}(a)da}}{\left(m\int_{0}^{+\infty}{\overline{u}(a)da}+\overline{V}\right)^{2}}\\
       -\frac{\alpha\overline{V}}{m\int_{0}^{+\infty}{\overline{u}(a)da}+\overline{V}}
       +\frac{\alpha m\overline{V}\int_{0}^{+\infty}{\overline{u}(a)da}}{\left(m\int_{0}^{+\infty}{\overline{u}(a)da}+\overline{V}\right)^{2}} & \Lambda-\frac{2r}{K}\overline{V}
       -\frac{\alpha\int_{0}^{+\infty}{\overline{u}(a)da}}{m\int_{0}^{+\infty}{\overline{u}(a)da}+\overline{V}}
       +\frac{\alpha\overline{V}\int_{0}^{+\infty}{\overline{u}(a)da}}
       {\left(m\int_{0}^{+\infty}{\overline{u}(a)da}+\overline{V}\right)^{2}} \\
       \end{array}
       \right)\\
       &&\times\tau\int_{0}^{+\infty}{e^{-\int_{0}^{a}{(\lambda I+\tau Q)dl}}da}
       -\left(
         \begin{array}{cc}
         \frac{\alpha\eta\overline{V}}{m\int_{0}^{+\infty}{\overline{u}(a)da}+\overline{V}}  & 0_{\mathbb{R}} \\
         0_{\mathbb{R}} & 0_{\mathbb{R}} \\
         \end{array}
         \right)\tau
                             \int_{0}^{+\infty}{\beta(a)e^{-\int_{0}^{a}{(\lambda I+\tau Q)dl}}}da \\
   \end{array}
\end{equation}
and
\begin{equation}\label{Klambdapsi}
  K(\lambda,\psi)=\tau DB(\overline{w}_{\tau})\left(\int_{0}^{a}{e^{-\int_{s}^{a}{(\lambda I+\tau Q)dl}}\psi(s)}ds\right).
\end{equation}
Whenever $\Delta(\lambda)$ is invertible, we have
\begin{equation}\label{xi}
  \delta=(\Delta(\lambda))^{-1}(\gamma+K(\lambda,\psi)).
\end{equation}
Combining the above discussion and the proof of Lemma 3.5 in \cite{WangLiu-JMAA-2012}, we obtain the following lemma.
\begin{lemma}
The following results hold
\begin{itemize}
  \item [(i)] $\sigma(B_{\tau})\cap\Omega=\sigma_{p}(B_{\tau})\cap\Omega=\{\lambda\in\Omega: \det(\Delta(\lambda))=0\}$;
  \item [(ii)] If $\lambda\in\rho(B_{\tau})\cap\Omega$, we have the following formula for resolvent
  \begin{equation}\label{lambdaIBtau}
    (\lambda I -B_{\tau})^{-1}\left(
                                \begin{array}{c}
                                  \delta \\
                                  \varphi \\
                                \end{array}
                              \right)
                              =\left(
                                 \begin{array}{c}
                                   0_{\mathbb{R}^{2}} \\
                                   \psi \\
                                 \end{array}
                               \right),
  \end{equation}
  where
  \begin{equation*}
    \psi(a)= e^{-\int_{0}^{a}{(\lambda I+\tau Q)dl}}(\Delta(\lambda))^{-1}\left[\gamma+K(\lambda,\varphi)\right]+\int_{0}^{a}
    {e^{-\int_{s}^{a}{(\lambda I+\tau Q)dl}}}\varphi(s)ds
  \end{equation*}
  with $\Delta(\lambda)$ and $K(\lambda,\varphi)$ defined in (\ref{Deltalambda}) and (\ref{Klambdapsi}).
\end{itemize}
\end{lemma}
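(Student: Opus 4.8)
The plan is to exploit the factorization (\ref{invertible}) together with the compactness (indeed the finite rank) of the perturbation $\tau DH(\overline{x}_{\tau})$, so that the spectral problem for $B_{\tau}$ inside $\Omega$ reduces to the invertibility of the finite-dimensional matrix $\Delta(\lambda)$.

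First I would record that for every $\lambda\in\Omega$ one has $\lambda\in\rho(A_{\tau})$ by the earlier lemma, so $(\lambda I-A_{\tau})^{-1}$ is bounded and given explicitly by (\ref{lambdaIAtau1}). Since $\tau DH(\overline{x}_{\tau})$ is compact, the product $\tau DH(\overline{x}_{\tau})(\lambda I-A_{\tau})^{-1}$ is compact, and by (\ref{invertible}) the operator $\lambda I-B_{\tau}$ is invertible if and only if $I-\tau DH(\overline{x}_{\tau})(\lambda I-A_{\tau})^{-1}$ is invertible. Applying the Fredholm alternative to this compact perturbation of the identity, the operator $I-\tau DH(\overline{x}_{\tau})(\lambda I-A_{\tau})^{-1}$ is invertible precisely when it is injective, and whenever it fails to be invertible its kernel is nontrivial; pushing such a kernel element through $(\lambda I-A_{\tau})^{-1}$ produces a nonzero element of $\ker(\lambda I-B_{\tau})$. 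Hence every point of $\sigma(B_{\tau})\cap\Omega$ is an eigenvalue, which gives $\sigma(B_{\tau})\cap\Omega=\sigma_{p}(B_{\tau})\cap\Omega$.

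Next I would carry out the algebraic reduction already set up in the excerpt. Writing $(I-\tau DH(\overline{x}_{\tau})(\lambda I-A_{\tau})^{-1})\binom{\delta}{\varphi}=\binom{\gamma}{\psi}$ and using that $\tau DB(\overline{w}_{\tau})$ depends on its argument only through the two moments $\int_{0}^{+\infty}\varphi(a)\,da$ and $\int_{0}^{+\infty}\beta(a)\varphi(a)\,da$, the system collapses to $\Delta(\lambda)\delta=\gamma+K(\lambda,\psi)$ together with $\varphi=\psi$, where $\Delta(\lambda)$ and $K(\lambda,\psi)$ are as in (\ref{Deltalambda}) and (\ref{Klambdapsi}). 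The $\varphi$-component is solved trivially, so the remaining obstruction is the $2\times2$ linear system $\Delta(\lambda)\delta=\gamma+K(\lambda,\psi)$: it is uniquely solvable for every right-hand side exactly when $\det(\Delta(\lambda))\neq0$, and when $\det(\Delta(\lambda))=0$ the system is neither injective nor surjective. Therefore $I-\tau DH(\overline{x}_{\tau})(\lambda I-A_{\tau})^{-1}$ is invertible if and only if $\det(\Delta(\lambda))\neq0$, which combined with the previous paragraph yields the characterization $\sigma(B_{\tau})\cap\Omega=\{\lambda\in\Omega:\det(\Delta(\lambda))=0\}$ claimed in (i).

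For part (ii), when $\lambda\in\rho(B_{\tau})\cap\Omega$ the matrix $\Delta(\lambda)$ is invertible, so (\ref{xi}) gives $\delta=(\Delta(\lambda))^{-1}(\gamma+K(\lambda,\varphi))$ and $\varphi=\psi$. I would then insert these into the explicit resolvent formula (\ref{lambdaIAtau1}) via $(\lambda I-B_{\tau})^{-1}=(\lambda I-A_{\tau})^{-1}(I-\tau DH(\overline{x}_{\tau})(\lambda I-A_{\tau})^{-1})^{-1}$; the first component of the output lies in $\{0_{\mathbb{R}^{2}}\}$ and the second is exactly the stated $\psi(a)$. The only genuinely delicate point is the identity $\sigma(B_{\tau})\cap\Omega=\sigma_{p}(B_{\tau})\cap\Omega$, which rests entirely on the compactness of $\tau DH(\overline{x}_{\tau})$ and the Fredholm alternative; everything else is the bookkeeping displayed in the excerpt and parallels the proof of Lemma 3.5 in \cite{WangLiu-JMAA-2012}.
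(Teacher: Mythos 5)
Your proposal is correct and follows essentially the same route as the paper: the paper's ``proof'' is precisely the preceding discussion --- the factorization (\ref{invertible}) plus the algebraic reduction to the system $\Delta(\lambda)\delta=\gamma+K(\lambda,\psi)$, $\varphi=\psi$ --- combined with a citation to Lemma 3.5 of \cite{WangLiu-JMAA-2012} for the remaining spectral details. The only difference is that you make explicit the Fredholm-alternative argument (using the finite rank, hence compactness, of $\tau DH(\overline{x}_{\tau})$) that identifies $\sigma(B_{\tau})\cap\Omega$ with $\sigma_{p}(B_{\tau})\cap\Omega$, which is exactly the content the paper delegates to the cited reference.
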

\noindent
Under Assumption \ref{assumption1}, we have
\begin{equation}\label{intea}
  \int_{0}^{+\infty}{e^{-\int_{0}^{a}({\lambda I+\tau Q})dl}}da=
  \left(
    \begin{array}{cc}
      \frac{1}{\lambda+\sigma\tau} & 0 \\
      0 & \frac{1}{\lambda+\mu\tau} \\
    \end{array}
  \right)
\end{equation}
and
\begin{equation}\label{intbetaea}
  \int_{0}^{+\infty}{\beta(a)e^{-\int_{0}^{a}({\lambda I+\tau Q})dl}}da=
  \left(
    \begin{array}{cc}
      \frac{\beta^{*} e^{-(\lambda+\sigma\tau)}}{\lambda+\sigma\tau} & 0 \\
      0 & \frac{\beta^{*}e^{-(\lambda+\mu\tau)}}{\lambda+\mu\tau} \\
    \end{array}
  \right).
\end{equation}
It follows from (\ref{Deltalambda}), (\ref{intea}) and (\ref{intbetaea}) that the characteristic equation at the positive equilibrium $\overline{x}_{\tau}$ is
\begin{equation}\label{characteristicequation}
     \begin{array}{ccl}
       \det(\Delta(\lambda)) & = & \left|
  \begin{array}{cc}
                  1+\frac{\tau m \alpha\eta \overline{V}\xi}{(\lambda+\sigma\tau)\left(m\frac{\xi}{\sigma}+\overline{V}\right)^{2}}
                  -\frac{\tau\alpha \eta \overline{V}\sigma e^{-\lambda}}{(\lambda+\sigma\tau)\left(m\frac{\xi}{\sigma}+\overline{V}\right)}&

                  -\frac{\tau\left(
                  \frac{\alpha\eta\xi}{m\frac{\xi}{\sigma}+\overline{V}}
                  -\frac{\alpha\eta\overline{V}\xi}{\left(m\frac{\xi}{\sigma}+\overline{V}\right)^{2}}
                  \right)}{\lambda+\mu\tau}\\

                  -\frac{\tau\left(
                  -\frac{\alpha\overline{V}}{m\frac{\xi}{\sigma}+\overline{V}}
                  +\frac{m\alpha \overline{V}\xi}{\sigma\left(m\frac{\xi}{\sigma}+\overline{V}\right)^{2}}
                  \right)}{\lambda+\sigma\tau}&

                  1-\frac{\tau\left(
                  \Lambda-\frac{2r}{K}\overline{V}
                  -\frac{\alpha\xi}{\sigma\left(m\frac{\xi}{\sigma}+\overline{V}\right)}
                  +\frac{\alpha\overline{V}\xi}{\sigma\left(m\frac{\xi}{\sigma}+\overline{V}\right)^{2}}
                  \right)}{\lambda+\mu\tau}\\
                 \end{array}
  \right| \\
        & = & \frac{\lambda^{2}+\tau p_{1}\lambda+\tau^{2} p_{0}+(\tau q_{1}\lambda +\tau^{2}q_{0})e^{-\lambda}}{(\lambda+\sigma\tau)(\lambda+\mu\tau)} \\
        & \triangleq & \frac{\tilde{f}(\lambda)}{\tilde{g}(\lambda)}=0, \\
     \end{array}
\end{equation}
where
\begin{equation*}
    \begin{array}{lll}
      \xi & = & \int_{0}^{+\infty}{\beta(a)\overline{u}(a)da},\\
      \overline{V} & = & \int_{0}^{+\infty}{\overline{v}(a)da}, \\
      p_{1} & = & \sigma-r+\frac{2r}{K}\overline{V}+\frac{\alpha m\xi(\xi+\eta\sigma^{2}\overline{V})}{(m\xi+\sigma\overline{V})^{2}},\\
      p_{0} & = &\frac{2r\sigma\overline{V}}{K}
      -\frac{mr\alpha\eta\sigma^{2}\xi\overline{V}(K-2\overline{V})}{K(m\xi+\sigma\overline{V})^{2}}
      - \frac{\sigma[m\xi^{2}(mr-\alpha)+r\sigma\overline{V}(\sigma\overline{V}+2m\xi)]}{(m\xi+\sigma\overline{V})^{2}}
      +\frac{m\alpha^{2}\eta\sigma^{2}\xi^{2}\overline{V}}{(m\xi+\sigma\overline{V})^{3}},\\
     q_{1} & = & -\frac{\alpha\eta\sigma^{2}\overline{V}}{m\xi+\sigma\overline{V}},  \\
     q_{0} & = & \frac{\alpha\eta\sigma^{2}\overline{V}[r(K-2\overline{V})(m\xi+\sigma\overline{V})^{2}-K\alpha m \xi^{2}]}{K(m\xi+\sigma\overline{V})^{3}},  \\
      \tilde{f}(\lambda) & = & \lambda^{2}+\tau p_{1}\lambda+\tau^{2} p_{0}+(\tau q_{1}\lambda +\tau^{2}q_{0})e^{-\lambda},\\
      \tilde{g}(\lambda) & = & (\lambda+\sigma\tau)(\lambda+\mu\tau). \\
    \end{array}
\end{equation*}
Let
\begin{equation*}
  \lambda=\tau\zeta.
\end{equation*}
Then we get
\begin{equation}\label{characteristicequationg}
  \tilde{f}(\lambda)=\tilde{f}(\tau\zeta):=\tau^{2}g(\zeta)=\tau^{2}[\zeta^{2}+ p_{1}\zeta+p_{0}+(q_{1}\zeta + q_{0})e^{-\tau\zeta}].
\end{equation}
It is simple to prove that
\begin{equation*}
  \{\lambda\in\Omega:\det(\Delta(\lambda))=0\}=\{\lambda=\tau\zeta\in\Omega:g(\zeta)=0\}.
\end{equation*}

\section{Existence of Hopf bifurcation}

\noindent

In this section, we consider the parameter $\tau$ as a bifurcation parameter and study the existence of Hopf bifurcation by applying the Hopf bifurcation theory \cite{LiuMagalRuan-ZAMP-2011} to the Cauchy problem (\ref{nonddaCp}).
From (\ref{characteristicequationg}), we have
\begin{equation}\label{characteristicequation3}
 g(\zeta)=\zeta^{2}+ p_{1}\zeta+p_{0}+(q_{1}\zeta + q_{0})e^{-\tau\zeta},
\end{equation}
where
\begin{equation}\label{p1p2p3p4}
   \begin{array}{ll}
     p_{1}=\frac{m\alpha\eta^{2}(r+2\sigma)-\alpha^{2}\eta^{2}-m\sigma\eta+1}{m\alpha\eta^{2}},

     &p_{0}=\frac{\sigma[mr(2\alpha\eta-1)-2\alpha(\alpha\eta-1)]}{ m\alpha\eta},  \\

     q_{1}=-\sigma, &

     q_{0}=\frac{\sigma(-mr\alpha\eta^{2}+\alpha^{2}\eta^{2}-1)}{m\alpha\eta^{2}}. \\
   \end{array}
\end{equation}
Additionally,
if $ m r\eta >\alpha\eta-1>0$, then $ p_{0}+q_{0} = \frac{\sigma(\alpha\eta-1)[mr\eta-(\alpha\eta-1)]}{m\alpha\eta^{2}}>0$ and $\zeta =0$ is not a eigenvalue of (\ref%
{characteristicequation3}).

Let $\zeta=i\omega (\omega>0)$ be a purely imaginary root of $g(\zeta)=0$. Then we have
\begin{equation*}
  -\omega^{2}+ip_{1}\omega+p_{0}+(iq_{1}\omega+q_{0})e^{-i\omega\tau}=0.
\end{equation*}
Separating real and imaginary parts of the above equation gives rise to
\begin{equation}\label{realimaginary}
  \left\{
     \begin{array}{l}
       \omega^{2}-p_{0}=q_{1}\omega\sin(\omega\tau)+q_{0}\cos(\omega\tau), \\
       -p_{1}\omega=q_{1}\omega\cos(\omega\tau)-q_{0}\sin(\omega\tau). \\
     \end{array}
   \right.
\end{equation}
 Consequently, we obtain
\begin{equation*}
  (\omega^{2}-p_{0})^{2}+(-p_{1}\omega)^{2}=(q_{1}\omega)^2+q_{0}^{2},
\end{equation*}
that is,
\begin{equation}\label{omega34}
  \omega^{4}+(p_{1}^{2}-2p_{0}-q_{1}^{2})\omega^{2}+p_{0}^{2}-q_{0}^{2}=0.
\end{equation}
Set $\omega^{2}=\theta$, then (\ref{omega34}) becomes
\begin{equation}\label{siama2}
 \theta^{2}+(p_{1}^{2}-2p_{0}-q_{1}^{2})\theta+p_{0}^{2}-q_{0}^{2}=0.
\end{equation}
Let $\theta_{1}$ and $\theta_{2}$ denote two roots of (\ref{siama2}), then we find
\begin{equation}\label{sigma1sigma2}
  \theta_{1}+\theta_{2}=-(p_{1}^{2}-2p_{0}-q_{1}^{2}),\quad \theta_{1}\theta_{2}=p_{0}^{2}-q_{0}^{2}.
\end{equation}
 Consequently, it is apparent from (\ref{sigma1sigma2}) that when $p_{0}-q_{0}  =  \frac{\sigma[mr\eta(3\alpha\eta-1)-(3\alpha\eta+1)(\alpha\eta-1)]}{m\alpha\eta^{2}}<0(i.e.,mr\eta(3\alpha\eta-1)<(3\alpha\eta+1)(\alpha\eta-1))$, (\ref{siama2}) has only one positive real root $\theta_{0}$. Then (\ref{omega34}) has only one positive real root $\omega_{0}=\sqrt{\theta_{0}}$. According to (\ref{realimaginary}), we can yield that $g(\zeta)=0$ with $\tau=\tau_{k}$, $k=0,1,2,\cdots$ has a pair of purely imaginary roots $\pm i\omega_{0}$, where
\begin{equation*}
  \omega_{0}^{2}=\frac{-(p_{1}^{2}-2p_{0}-q_{1}^{2})+\sqrt{(p_{1}^{2}-2p_{0}-q_{1}^{2})^{2}-4(p_{0}^{2}-q_{0}^{2})}}{2}
\end{equation*}
and
\begin{equation}\label{tauk}
  \tau_{k}=\left\{
             \begin{array}{l}
               \frac{1}{\omega_{0}}\left(\arccos\frac{(q_{0}-p_{1}q_{1})\omega_{0}^{2}-p_{0}q_{0}}{q_{1}^{2}\omega_{0}^2+q_{0}^{2}}+2k\pi\right),
               \mbox{  if  } \frac{\omega_{0}(q_{1}\omega_{0}^{2}+p_{1}q_{0}-p_{0}q_{1})}{q_{1}^{2}\omega_{0}^2+q_{0}^{2}}\geq 0,\\
                \frac{1}{\omega_{0}}\left(2\pi-\arccos\frac{(q_{0}-p_{1}q_{1})\omega_{0}^{2}-p_{0}q_{0}}{q_{1}^{2}\omega_{0}^2+q_{0}^{2}}+2k\pi\right),
                \mbox{  if  } \frac{\omega_{0}(q_{1}\omega_{0}^{2}+p_{1}q_{0}-p_{0}q_{1})}{q_{1}^{2}\omega_{0}^2+q_{0}^{2}}< 0,\\
             \end{array}
           \right.
\end{equation}
for $k=0,1,2,\cdots.$
\begin{assumption}\label{assumption2}
  Assume that $m r\eta >\alpha\eta-1>0$ and $mr\eta(3\alpha\eta-1)<(3\alpha\eta+1)(\alpha\eta-1))$.
\end{assumption}
\begin{lemma}
Let Assumption \ref{assumption1} and \ref{assumption2} hold, then
\begin{equation*}
  \frac{\mbox{d}g(\zeta)}{\mbox{d}\zeta}\Big|_{\zeta=i\omega_{0}}\neq0.
\end{equation*}
Therefore, $\zeta=i\omega_{0}$ is a simple root of (\ref{characteristicequation3}).
\end{lemma}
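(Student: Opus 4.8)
The plan is to argue by contradiction and to show that a root of multiplicity at least two at $\zeta = i\omega_{0}$ would force $\theta_{0}=\omega_{0}^{2}$ to be a repeated root of the quadratic (\ref{siama2}), which is impossible under Assumption \ref{assumption2}. First I would write $g(\zeta)=P(\zeta)+Q(\zeta)e^{-\tau\zeta}$ with $P(\zeta)=\zeta^{2}+p_{1}\zeta+p_{0}$ and $Q(\zeta)=q_{1}\zeta+q_{0}$, both having real coefficients, so that
\[
g'(\zeta)=P'(\zeta)+\left[Q'(\zeta)-\tau Q(\zeta)\right]e^{-\tau\zeta}.
\]
Assuming, contrary to the claim, that $g'(i\omega_{0})=0$, and combining this with $g(i\omega_{0})=0$, I obtain the two relations
\[
P(i\omega_{0})=-Q(i\omega_{0})\,e^{-i\omega_{0}\tau},\qquad P'(i\omega_{0})=\left[\tau Q(i\omega_{0})-Q'(i\omega_{0})\right]e^{-i\omega_{0}\tau}.
\]

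The key device will be the auxiliary function $F(\omega):=|P(i\omega)|^{2}-|Q(i\omega)|^{2}$. Since $P$ and $Q$ have real coefficients, $\overline{P(i\omega)}=P(-i\omega)$ and $\overline{Q(i\omega)}=Q(-i\omega)$, so $F(\omega)=P(i\omega)P(-i\omega)-Q(i\omega)Q(-i\omega)$, and a direct expansion identifies $F(\omega)$ with the left-hand side of (\ref{siama2}) evaluated at $\theta=\omega^{2}$, namely $F(\omega)=\theta^{2}+(p_{1}^{2}-2p_{0}-q_{1}^{2})\theta+p_{0}^{2}-q_{0}^{2}$. Differentiating gives
\[
F'(\omega)=i\left[P'(i\omega)P(-i\omega)-P(i\omega)P'(-i\omega)\right]-i\left[Q'(i\omega)Q(-i\omega)-Q(i\omega)Q'(-i\omega)\right].
\]
I would then substitute the two relations above, together with their complex conjugates (valid because all coefficients are real), and use $e^{-i\omega_{0}\tau}e^{i\omega_{0}\tau}=1$; after this substitution every term involves only $Q(\pm i\omega_{0})$, $Q'(\pm i\omega_{0})$ and the factor $\tau$, and the $\tau$-dependent contributions cancel against the rest in pairs, yielding $F'(\omega_{0})=0$.

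Writing $F(\omega)=\widetilde{S}(\omega^{2})$, where $\widetilde{S}$ denotes the left-hand side of (\ref{siama2}), the relation $F'(\omega_{0})=2\omega_{0}\widetilde{S}'(\theta_{0})=0$ with $\omega_{0}>0$ forces $\widetilde{S}'(\theta_{0})=0$, i.e. $\theta_{0}$ is a repeated root of (\ref{siama2}). To derive the contradiction I would invoke the sign information already available: under Assumptions \ref{assumption1}--\ref{assumption2} one has $p_{0}+q_{0}>0$ and $p_{0}-q_{0}<0$, hence the constant term $p_{0}^{2}-q_{0}^{2}=(p_{0}-q_{0})(p_{0}+q_{0})<0$, so the two roots of (\ref{siama2}) are real, distinct, and of opposite sign. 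In particular the unique positive root $\theta_{0}$ is simple, contradicting $\widetilde{S}'(\theta_{0})=0$; therefore $g'(i\omega_{0})\neq0$ and $i\omega_{0}$ is a simple root. I expect the cancellation computation of $F'(\omega_{0})$ to be the main obstacle, since it must be organized so that the conjugated relations are applied correctly and the explicit $\tau$-terms drop out; a more direct alternative --- eliminating $e^{-i\omega_{0}\tau}$ from $g'(i\omega_{0})$ via $g(i\omega_{0})=0$ and showing its real or imaginary part cannot vanish --- is conceptually simpler but produces heavier expressions in $p_{0},p_{1},q_{0},q_{1},\tau$ that are harder to close.
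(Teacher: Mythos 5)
Your proof is correct, but it takes a genuinely different route from the paper's. The paper also argues by contradiction, but through the parametrized root $\zeta(\tau)$: differentiating $g(\zeta(\tau))=0$ in $\tau$ gives
\begin{equation*}
\left\{2\zeta+p_{1}+[q_{1}-\tau(q_{1}\zeta+q_{0})]e^{-\tau\zeta}\right\}\frac{d\zeta(\tau)}{d\tau}=\zeta(q_{1}\zeta+q_{0})e^{-\tau\zeta},
\end{equation*}
so that $g'(i\omega_{0})=0$ makes the left-hand side vanish and forces $i\omega_{0}(iq_{1}\omega_{0}+q_{0})e^{-i\omega_{0}\tau}=0$; separating real and imaginary parts yields $(q_{1}\omega_{0}^{2})^{2}+(q_{0}\omega_{0})^{2}=0$, hence $q_{1}=q_{0}=0$, contradicting $q_{1}=-\sigma<0$. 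Your argument never touches the $\tau$-dependence of the root: from $g(i\omega_{0})=g'(i\omega_{0})=0$ and the conjugate relations (valid since $P$, $Q$ have real coefficients) the $\tau$-terms indeed cancel, giving $P'(i\omega_{0})P(-i\omega_{0})-P(i\omega_{0})P'(-i\omega_{0})=Q'(i\omega_{0})Q(-i\omega_{0})-Q(i\omega_{0})Q'(-i\omega_{0})$ and hence $F'(\omega_{0})=0$, so $\theta_{0}=\omega_{0}^{2}$ would be a repeated root of (\ref{siama2}); but under Assumption \ref{assumption2} the constant term satisfies $p_{0}^{2}-q_{0}^{2}=(p_{0}+q_{0})(p_{0}-q_{0})<0$, so (\ref{siama2}) has two distinct real roots of opposite sign and its unique positive root is simple --- a contradiction. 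Comparing the two: the paper's proof is shorter and reuses the same identity it needs anyway for the transversality computation in the lemma that follows, exploiting the explicit sign $q_{1}=-\sigma<0$; yours is more self-contained and in one respect more careful, since invoking $d\zeta/d\tau$ at a hypothetical multiple root is delicate (a differentiable branch $\zeta(\tau)$ is guaranteed by the implicit function theorem only at simple roots, which is precisely what is being proved), whereas your route needs no such branch. Your argument also makes transparent that the inequality $p_{0}-q_{0}<0$ in Assumption \ref{assumption2} does double duty: it yields both the existence and the simplicity of the positive root $\theta_{0}$ of (\ref{siama2}).
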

\begin{proof}
On the basis of (\ref{characteristicequation3}), we have
\begin{equation*}
\frac{\mbox{d}g(\zeta)}{\mbox{d}\zeta}\Big|_{\zeta=i\omega_{0}}=\left\{2\zeta+p_{1}+[q_{1}-\tau (q_{1}\zeta+ q_{0})]e^{-\tau\zeta}\right\}\Big|_{\zeta=i\omega_{0}}
\end{equation*}
and
\begin{equation*}
  \left\{2\zeta+p_{1}+[q_{1}-\tau (q_{1}\zeta+ q_{0})]e^{-\tau\zeta}\right\}\frac{d\zeta(\tau)}{d\tau}=\zeta(q_{1}\zeta+q_{0})e^{-\tau\zeta}.
\end{equation*}
Suppose that $\frac{dg(\zeta)}{d\zeta}\Big|_{\zeta=i\omega_{0}}=0$, then
\begin{equation*}
  i\omega_{0}(iq_{1}\omega_{0}+q_{0})e^{-i\omega_{0}\tau}=0.
\end{equation*}
Separating real and imaginary in the above equation, we obtain
\begin{equation}
  \left\{
     \begin{array}{l}
       -q_{1}\omega_{0}^{2}\cos(\omega_{0}\tau)+q_{0}\omega_{0}\sin(\omega_{0}\tau)=0, \\
       q_{1}\omega_{0}^{2}\sin(\omega_{0}\tau)+q_{0}\omega_{0}\cos(\omega_{0}\tau)=0. \\
     \end{array}
   \right.
\end{equation}
That is,
\begin{equation*}
  (q_{1}\omega_{0}^{2})^{2}+(q_{0}\omega_{0})^{2}=0,
\end{equation*}
which implies
\begin{equation*}
  q_{1}\omega_{0}^{2}=q_{0}\omega_{0}=0.
\end{equation*}
Since $\omega_{0}>0$, we conclude that
\begin{equation*}
  q_{1}=q_{0}=0.
\end{equation*}
However, $q_{1}=-\sigma< 0$, which leads to a contradiction. Hence
\begin{equation*}
  \frac{\mbox{d}g(\zeta)}{\mbox{d}\zeta}\Big|_{\zeta=i\omega_{0}}\neq0.
\end{equation*}
This completes the proof.
\end{proof}
\begin{lemma}
  Let Assumption \ref{assumption1} and \ref{assumption2} hold. Denote the root $\zeta(\tau)=\alpha(\tau)+i\omega(\tau)$ of $g(\zeta)=0$ satisfying $\alpha(\tau_{k})=0$ and $\omega(\tau_{k})=\omega_{0}$, where $\tau_{k}$ is defined in (\ref{tauk}). Then
\begin{equation*}
  \alpha^{'}(\tau_{k})=\frac{\mbox{d}Re(\zeta)}{\mbox{d}\tau}\Big|_{\tau=\tau_{k}}>0.
\end{equation*}
\end{lemma}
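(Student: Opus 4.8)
The plan is to establish the transversality condition by implicit differentiation of the characteristic function together with the classical reciprocal trick. Differentiating $g(\zeta(\tau))=0$ with respect to $\tau$ and using the relation already recorded in the proof of the preceding lemma gives
\[
\left(\frac{d\zeta}{d\tau}\right)^{-1}=\frac{2\zeta+p_{1}+[q_{1}-\tau(q_{1}\zeta+q_{0})]e^{-\tau\zeta}}{\zeta(q_{1}\zeta+q_{0})e^{-\tau\zeta}}.
\]
Since for any nonzero complex number $w$ the quantities $Re(w)$ and $Re(w^{-1})=Re(w)/|w|^{2}$ share the same sign, it suffices to show that $Re[(d\zeta/d\tau)^{-1}]\big|_{\zeta=i\omega_{0}}>0$; this immediately yields $\alpha'(\tau_{k})>0$, which avoids computing $d\zeta/d\tau$ directly.

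First I would eliminate the transcendental term. From $g(\zeta)=0$ one has $(q_{1}\zeta+q_{0})e^{-\tau\zeta}=-(\zeta^{2}+p_{1}\zeta+p_{0})$; substituting this after splitting the fraction above into three pieces collapses it to the purely rational form
\[
\left(\frac{d\zeta}{d\tau}\right)^{-1}=-\frac{2\zeta+p_{1}}{\zeta(\zeta^{2}+p_{1}\zeta+p_{0})}+\frac{q_{1}}{\zeta(q_{1}\zeta+q_{0})}-\frac{\tau}{\zeta}.
\]
Next I would evaluate at $\zeta=i\omega_{0}$ and take real parts term by term. The last term is purely imaginary and drops out, while $Re(A/B)=Re(A\bar B)/|B|^{2}$ applied to the first two terms produces
\[
Re\left[\left(\frac{d\zeta}{d\tau}\right)^{-1}\right]\bigg|_{\zeta=i\omega_{0}}=\frac{p_{1}^{2}-2p_{0}+2\omega_{0}^{2}}{p_{1}^{2}\omega_{0}^{2}+(p_{0}-\omega_{0}^{2})^{2}}-\frac{q_{1}^{2}}{q_{1}^{2}\omega_{0}^{2}+q_{0}^{2}}.
\]

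The decisive observation is that the two denominators coincide: by the very derivation of $\omega_{0}$ in (\ref{realimaginary})--(\ref{omega34}), $\omega_{0}$ satisfies $(\omega_{0}^{2}-p_{0})^{2}+p_{1}^{2}\omega_{0}^{2}=q_{1}^{2}\omega_{0}^{2}+q_{0}^{2}$. Writing $b=p_{1}^{2}-2p_{0}-q_{1}^{2}$ and $c=p_{0}^{2}-q_{0}^{2}$, and recalling that $\theta_{0}=\omega_{0}^{2}$ is the larger root $\tfrac{-b+\sqrt{b^{2}-4c}}{2}$ of (\ref{siama2}), the combined numerator becomes $b+2\theta_{0}=\sqrt{b^{2}-4c}$. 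Hence
\[
Re\left[\left(\frac{d\zeta}{d\tau}\right)^{-1}\right]\bigg|_{\zeta=i\omega_{0}}=\frac{\sqrt{(p_{1}^{2}-2p_{0}-q_{1}^{2})^{2}-4(p_{0}^{2}-q_{0}^{2})}}{q_{1}^{2}\omega_{0}^{2}+q_{0}^{2}}>0,
\]
which is the claim. The main obstacle is the bookkeeping in the two reductions --- correctly splitting the fraction and clearing the exponential via $g(\zeta)=0$, then carrying out the real-part computation without a sign slip. The conceptual crux, however, is recognizing that the coincidence of the denominators together with the identity $b+2\theta_{0}=\sqrt{b^{2}-4c}$ forces a manifestly positive value; here Assumption \ref{assumption2} is exactly what is needed, since it makes $c=p_{0}^{2}-q_{0}^{2}=(p_{0}-q_{0})(p_{0}+q_{0})<0$ (using $p_{0}+q_{0}>0$ and $p_{0}-q_{0}<0$), so that $\theta_{0}$ is indeed the positive root and $b^{2}-4c>0$. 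I would therefore double-check the equality of denominators and the root selection most carefully, as these are the only places a spurious sign could enter.
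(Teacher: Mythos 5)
Your proposal is correct and follows essentially the same route as the paper's proof: studying the reciprocal $\frac{d\tau}{d\zeta}$, clearing the exponential via $g(\zeta)=0$, taking real parts term by term, and exploiting the coincidence of the two denominators (a consequence of (\ref{omega34})) so that the numerator reduces to $2\omega_{0}^{2}+p_{1}^{2}-2p_{0}-q_{1}^{2}=\sqrt{(p_{1}^{2}-2p_{0}-q_{1}^{2})^{2}-4(p_{0}^{2}-q_{0}^{2})}>0$. Your explicit remark that Assumption \ref{assumption2} gives $p_{0}^{2}-q_{0}^{2}<0$, which guarantees both the strict positivity of the square root and the selection of $\theta_{0}$ as the unique positive root, is a useful clarification of a step the paper leaves implicit.
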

\begin{proof}
For convenience, we study $\frac{d\tau}{d\zeta}$ instead of $\frac{d\zeta}{d\tau}$. From the expression of $g(\zeta)=0$, we have
\begin{equation*}
  \begin{array}{cll}
    \frac{\mbox{d}\tau}{\mbox{d}\zeta}\Big|_{\zeta=i\omega_{0}} & = & \frac{2\zeta+p_{1}+q_{1}e^{-\tau\zeta}-\tau(q_{1}\zeta +q_{0})e^{-\tau\zeta}}{\zeta(q_{1}\zeta +q_{0})e^{-\tau\zeta}}\Big|_{\zeta=i\omega_{0}} \\
     & = & \left(-\frac{2\zeta+p_{1}}{\zeta(\zeta^{2}+p_{1}\zeta+p_{0})}+\frac{q_{1}}{\zeta(q_{1}\zeta +q_{0})}-\frac{\tau}{\zeta}\right)\Big|_{\zeta=i\omega_{0}} \\
     & = &-\frac{i2\omega_{0}+p_{1}}{i\omega_{0}(-\omega_{0}^{2}+ip_{1}\omega_{0}+p_{0})}+\frac{q_{1}}{i\omega_{0}(iq_{1}\omega_{0} +q_{0})}-\frac{\tau}{i\omega_{0}}\\
     & = &\frac{1}{\omega_{0}}\frac{i2\omega_{0}+p_{1}}{p_{1}\omega_{0}-i(p_{0}-\omega_{0}^{2})}+\frac{1}{\omega_{0}}\frac{-q_{1}}{q_{1}\omega_{0}-iq_{0}}-\frac{\tau}{i\omega_{0}}\\
     & = & \frac{1}{\omega_{0}}\frac{(i2\omega_{0}+p_{1})[p_{1}\omega_{0}+i(p_{0}-\omega_{0}^{2})]}{(p_{1}\omega_{0})^{2}+(p_{0}-\omega_{0}^{2})^{2}}
     +\frac{1}{\omega_{0}}\frac{-q_{1}(q_{1}\omega_{0}+iq_{0})}{(q_{1}\omega_{0})^{2}+q_{0}^{2}}+\frac{i\tau}{\omega_{0}}.\\
  \end{array}
\end{equation*}
Therefore, we have
\begin{equation*}
     \begin{array}{ccl}
       $\mbox{Re}$\left(\frac{\mbox{d}\tau}{\mbox{d}\zeta}\Big|_{\zeta=i\omega_{0}} \right) & = & \frac{2\omega_{0}^{2}+p_{1}^{2}-2p_{0}}{(p_{1}\omega_{0})^{2}+(p_{0}-\omega_{0}^{2})^{2}}-\frac{q_{1}^{2}}{(q_{1}\omega_{0})^{2}+q_{0}^{2}} \\
        & = & \frac{2\omega_{0}^{2}+p_{1}^{2}-2p_{0}-q_{1}^{2}}{(q_{1}\omega_{0})^{2}+q_{0}^{2}}. \\
     \end{array}
\end{equation*}
Since
\begin{equation*}
  \omega_{0}^{2}=\frac{-(p_{1}^{2}-2p_{0}-q_{1}^{2})+\sqrt{(p_{1}^{2}-2p_{0}-q_{1}^{2})^{2}-4(p_{0}^{2}-q_{0}^{2})}}{2},
\end{equation*}
we can further obtain
\begin{equation*}
  \begin{array}{ccl}
    \mbox{sign}\left(\frac{\mbox{d}\rm{Re}(\zeta)}{\mbox{d}\tau}\Big|_{\tau=\tau_{k}}\right) & = & \mbox{sign}\left(\mbox{Re}\left(\frac{\mbox{d}\tau}{\mbox{d}\zeta}\Big|_{\zeta=i\omega_{0}}\right)\right) \\
     & = & \mbox{sign}\left(\frac{2\omega_{0}^{2}+p_{1}^{2}-2p_{0}-q_{1}^{2}}{(q_{1}\omega_{0})^{2}+q_{0}^{2}}\right)>0. \\
  \end{array}
\end{equation*}
\end{proof}
\noindent
Thus we conclude the following theorem.
\begin{theorem}\label{HopfBifurcation}
Let Assumption \ref{assumption1} and \ref{assumption2} hold. Then there exist $\tau_{k}>0, k=0,1,2,\cdots$($\tau_{k}$ is defined in (\ref{tauk})), such that when $\tau=\tau_{k}$, the predator-prey model (\ref{system}) undergoes a Hopf bifurcation at the equilibrium $(\overline{u}_{\tau_{k}}(a),\overline{V})$. In particular, a non-trivial periodic solution bifurcates from the equilibrium $(\overline{u}_{\tau_{k}}(a),\overline{V})$ when $\tau=\tau_{k}$.
\end{theorem}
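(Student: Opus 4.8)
The plan is to verify the hypotheses of the abstract Hopf bifurcation theorem for non-densely defined Cauchy problems from \cite{LiuMagalRuan-ZAMP-2011} and then apply it directly to the Cauchy problem (\ref{nonddaCp}), transferring the conclusion back to the original model (\ref{system}). All of the analytic groundwork has already been assembled in the preceding lemmas: the operator $A_\tau$ is a Hille--Yosida operator with the resolvent estimate (\ref{Hille-Yosida}); the perturbed operator $B_\tau$ satisfies the essential-growth-bound condition $\omega_{0,ess}((B_\tau)_0)<0$; the characteristic values are identified with the zeros of $\det(\Delta(\lambda))$, equivalently with the zeros of $g(\zeta)$; and the nonlinearity $\mathcal{H}$ in (\ref{fracdytdt}) satisfies $\mathcal{H}(0)=0$ and $D\mathcal{H}(0)=0$ with the required smoothness so that the reduced system is amenable to center manifold reduction.

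First I would recall the precise transversality/eigenvalue hypotheses demanded by the theorem in \cite{LiuMagalRuan-ZAMP-2011}: at the critical parameter $\tau=\tau_k$ the linearized operator $B_{\tau_k}$ must have a pair of simple conjugate purely imaginary eigenvalues $\pm i\omega_0\tau_k$ (in the $\lambda$-variable), no other eigenvalue on the imaginary axis should be an integer multiple of these, and the eigenvalues must cross the imaginary axis transversally as $\tau$ varies. The computation in Section 2 shows that $\lambda=\tau\zeta$ converts $\det(\Delta(\lambda))=0$ into $g(\zeta)=0$, so a purely imaginary $\zeta=i\omega_0$ corresponds to a purely imaginary $\lambda=i\omega_0\tau$; the explicit construction of $\omega_0$ and the sequence $\tau_k$ in (\ref{tauk}) supplies the pair of imaginary characteristic values. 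Under Assumption \ref{assumption2} the quadratic (\ref{siama2}) has a single positive root, so $\omega_0$ is the \emph{unique} positive frequency, which rules out additional imaginary roots and secures the nonresonance condition.

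Next I would invoke Lemma 3.1, which establishes $\frac{\mathrm{d}g(\zeta)}{\mathrm{d}\zeta}\big|_{\zeta=i\omega_0}\neq 0$, giving simplicity of the eigenvalue $i\omega_0$ for $g$, hence simplicity of $i\omega_0\tau_k$ as an eigenvalue of $B_{\tau_k}$ via part (i) of Lemma 2.4. Then Lemma 3.2 supplies the transversality condition $\alpha'(\tau_k)=\frac{\mathrm{d}\,\mathrm{Re}(\zeta)}{\mathrm{d}\tau}\big|_{\tau=\tau_k}>0$, so the root crosses the imaginary axis with nonzero (indeed positive) speed. With the spectral gap $\omega_{0,ess}((B_\tau)_0)<0$ from Lemma 2.3 guaranteeing that the center subspace is finite-dimensional and that the rest of the spectrum stays strictly in the left half-plane near $\tau_k$, every hypothesis of the non-densely-defined Hopf bifurcation theorem is met. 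Applying it yields a branch of small-amplitude nontrivial periodic orbits of (\ref{nonddaCp}) bifurcating at each $\tau_k$; since (\ref{nonddaCp}) is the reformulation of (\ref{system}) through the rescaling and the identification $x(t)=(0_{\mathbb{R}^2},w(t,\cdot))^T$, the periodic orbit corresponds to a nontrivial periodic solution of the original model at the equilibrium $(\overline{u}_{\tau_k}(a),\overline{V})$, which is the assertion of the theorem.

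The step I expect to be the main obstacle is verifying that the regularity and compactness requirements of the abstract theorem are genuinely satisfied in this functional-analytic setting rather than merely the algebraic eigenvalue conditions. Concretely, one must confirm that $\tau DH(\overline{x}_\tau)$ is a compact bounded perturbation (so that the essential spectrum is preserved, as used in Lemma 2.3 via \cite{DucrotLiuMagal-JMAA-2008}) and that $\mathcal{H}$ is $C^k$ with $k$ large enough for the normal-form/center-manifold machinery, with its derivatives vanishing at the origin to the required order. Because the nonlinearity involves the ratio-dependent term $\frac{\alpha V}{m\int u\,da + V}$, its smoothness near the positive equilibrium hinges on the denominator $m\int_0^{+\infty}\overline{u}(a)\,da+\overline{V}$ staying bounded away from zero; the standing hypothesis $mr\eta>\alpha\eta-1>0$ keeps the equilibrium strictly positive, so this denominator is positive and the map is smooth on a neighborhood, but I would make this nondegeneracy explicit before appealing to the theorem. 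Once compactness and smoothness are in hand, the remaining verification is bookkeeping against the precise statement in \cite{LiuMagalRuan-ZAMP-2011}.
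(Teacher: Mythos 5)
Your proposal is correct and follows essentially the same route as the paper: the paper offers no separate proof of the theorem, but simply concludes it by invoking the Hopf bifurcation theorem for non-densely defined Cauchy problems of \cite{LiuMagalRuan-ZAMP-2011}, whose hypotheses are exactly the items you assemble (Hille--Yosida property of $A_\tau$, compact perturbation and $\omega_{0,ess}((B_\tau)_0)<0$, the purely imaginary roots $\pm i\omega_0$ at $\tau=\tau_k$, simplicity from Lemma 3.1, and transversality from Lemma 3.2). Your additional remarks on uniqueness of $\omega_0$ (nonresonance) and on smoothness of the ratio-dependent nonlinearity near the strictly positive equilibrium are sound and, if anything, make explicit what the paper leaves implicit.
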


\section{Numerical simulations}

\noindent

In this section, we perform some numerical simulations to illustrate the
results showed in Theorem \ref{HopfBifurcation}. We choose the parameter values:
$\Lambda=1.2, \mu=0.2, r=1, K=200, \alpha=2.35, m=1.66, \sigma=0.5, \eta=1$,
and the initial values
$u(0,\cdot)=30.3745e^{-a}$ and $V(0)=37.3494$.
The age-specific fertility function becomes
\begin{equation*}
  \beta(a):=\left\{
               \begin{array}{cl}
                 0.5e^{0.5\tau}, &\quad \mbox{if}\quad a\geq \tau, \\
                 0, &\quad \mbox{if}\quad a\in (0,\tau). \\
               \end{array}
             \right.
\end{equation*}
With the help of the Matlab, we can readily get $m r\eta-(\alpha\eta-1)\approx 0.3100$,
$\alpha\eta-1\approx 1.3500$, and $(3\alpha\eta+1)(\alpha\eta-1)-mr\eta(3\alpha\eta-1)\approx 0.8245$ which satisfy the conditions of Assumption \ref{assumption2}.
Calculating it further, we can easily obtain that $\omega_{0}\approx 0.1598$ and the first critical value $\tau_{0}\approx 1.9340$.
\begin{figure}[htbp]
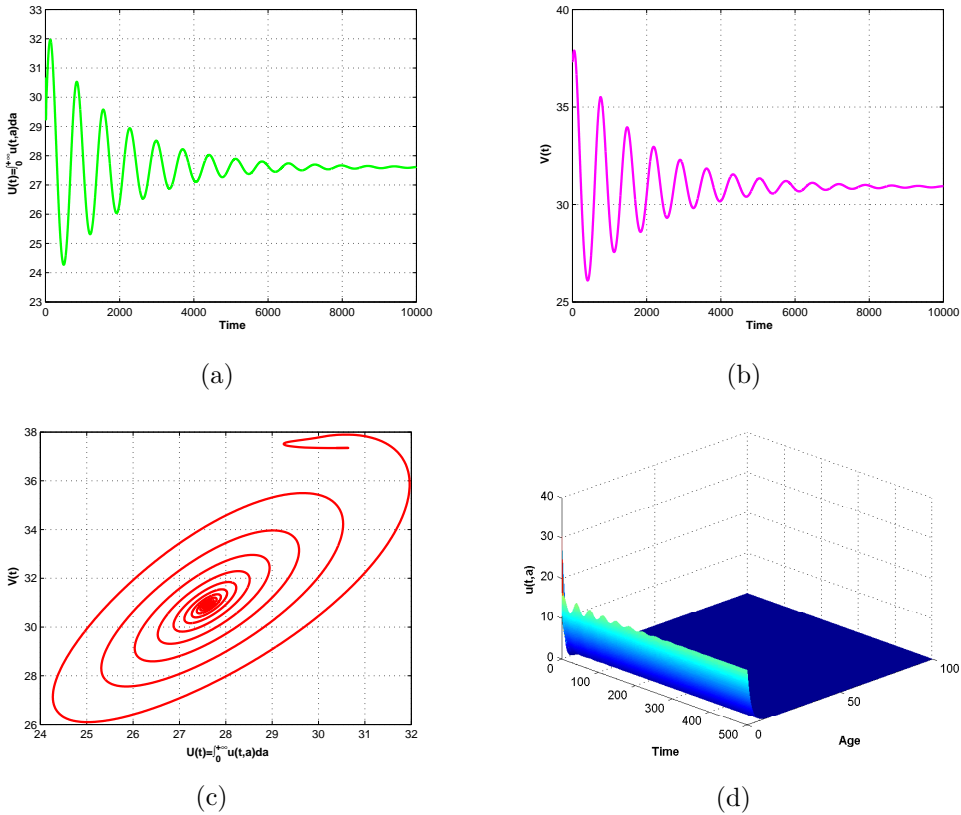

\setlength{\belowcaptionskip}{1pt}
 \centering
\begin{minipage}[c]{0.4\textwidth}
    \centering
    \subfigure []
{\includegraphics[width=2.5in]{TU9.eps}} \hfill
  \end{minipage}
\begin{minipage}[c]{0.4\textwidth}
    \centering
    \subfigure []
{\includegraphics[width=2.5in]{TV9.eps}}
  \end{minipage}
\begin{minipage}[c]{0.4\textwidth}
    \centering
    \subfigure []
{\includegraphics[width=2.5in]{UV9.eps}} \hfill
  \end{minipage}
\begin{minipage}[c]{0.4\textwidth}
    \centering
    \subfigure []
{\includegraphics[width=2.5in]{ATu9.eps}} \hfill
  \end{minipage}
\renewcommand{\figurename}{\footnotesize{Figure}}
\caption{\footnotesize{Numerical solutions of system (\ref{system}) when $\tau=0.9<\tau_{0}$: (a) solution behavior of the predator; (b) solution behavior of the prey; (c) phase trajectories for the system (\ref{system}); (d) distribution function of the predators $u(t,a)$. }}
\end{figure}

In Figure 1, we choose the bifurcation parameter $\tau=0.9<\tau_{0}$ and the positive equilibrium $(\overline{u}_{\tau=0.9}(a), \overline{V})=(13.6685e^{-0.45a}, 37.3494)$ is locally asymptotically stable. Figure 1(a) and Figure 1(b) demonstrate the solution behaviors of the predator and prey, respectively. Figure 1(c) reveals the phase diagram including $V(t)$ and $\int_{0}^{+\infty}{u(t,a)da}$
trajectories for the system (\ref{system}) and Figure 1(d) describes the change of the distribution function of the predators $u(t,a)$ as the time and age vary.
\begin{figure}[htbp]
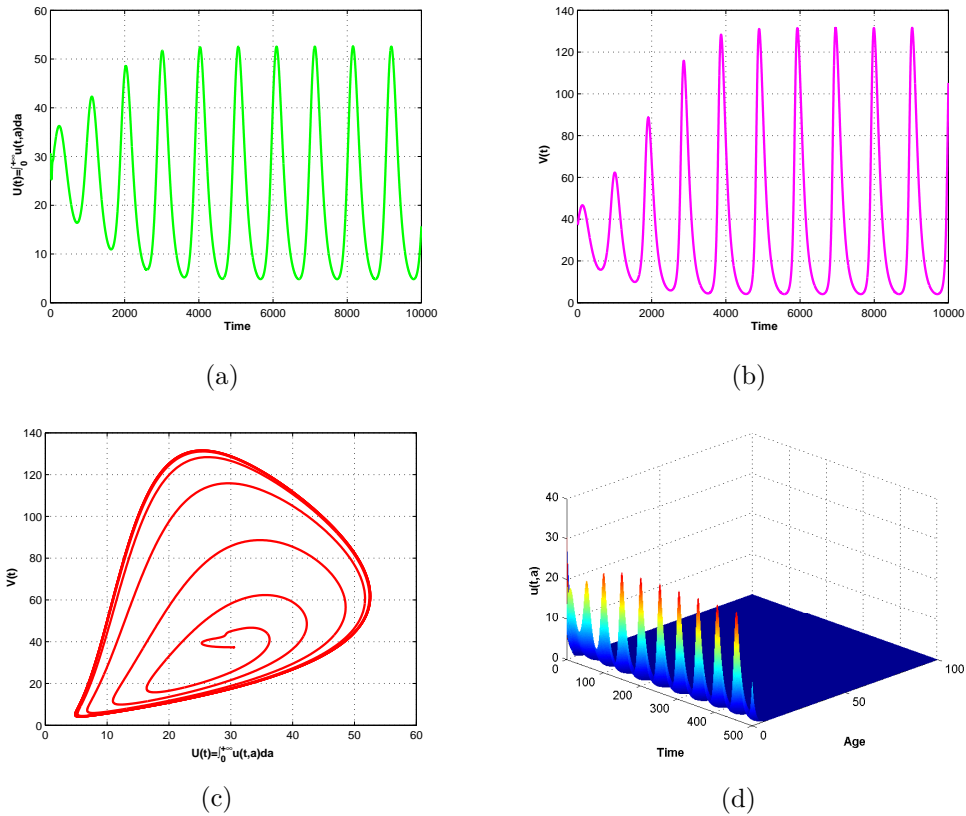

\setlength{\belowcaptionskip}{1pt}
 \centering
\begin{minipage}[c]{0.4\textwidth}
    \centering
    \subfigure []
{\includegraphics[width=2.5in]{TU2.eps}} \hfill
  \end{minipage}
\begin{minipage}[c]{0.4\textwidth}
    \centering
    \subfigure []
{\includegraphics[width=2.5in]{TV2.eps}}
  \end{minipage}
\begin{minipage}[c]{0.4\textwidth}
    \centering
    \subfigure []
{\includegraphics[width=2.5in]{UV2.eps}} \hfill
  \end{minipage}
\begin{minipage}[c]{0.4\textwidth}
    \centering
    \subfigure []
{\includegraphics[width=2.5in]{ATu2.eps}} \hfill
  \end{minipage}
\renewcommand{\figurename}{\footnotesize{Figure}}
\caption{\footnotesize{Numerical solutions of system (\ref{system}) when $\tau=2>\tau_{0}$: (a) periodic behavior of the predator; (b) periodic behavior of the prey; (c) phase portrait for the system (\ref{system}); (d) distribution function of the predators $u(t,a)$. }}
\end{figure}

By further continuously increasing $\tau$ to $2>\tau_{0}$, there appears a sustained periodic oscillation behavior of system (\ref{system}) around the positive equilibrium
$(\overline{u}_{\tau=2}(a), \overline{V})=(30.3745e^{-a}, 37.3494)$, meanwhile the conclusion of Theorem \ref{HopfBifurcation} is also numerically demonstrated (see Figure 2). In Figure 2(a) and Figure 2(b), the solution curves illustrate a sustained periodic oscillation behavior. As is shown in Figure 2(c), the oribt of $V(t)$ and $\int_{0}^{+\infty}{u(t,a)da}$
consistently approaches the stable limit cycles around this positive equilibrium. The
variation of $u(t,a)$ as time and predator-age vary at $\tau=2>\tau_{0}$ is demonstrated in Figure 2(d).

\section{Sensitivity analysis}

\noindent

In this section, we illustrate the influence of several important parameters on the dynamics of the predator population and prey population through graphical approach. The parameter values and the initial values are the same as Section 4.
\begin{figure}[htbp]
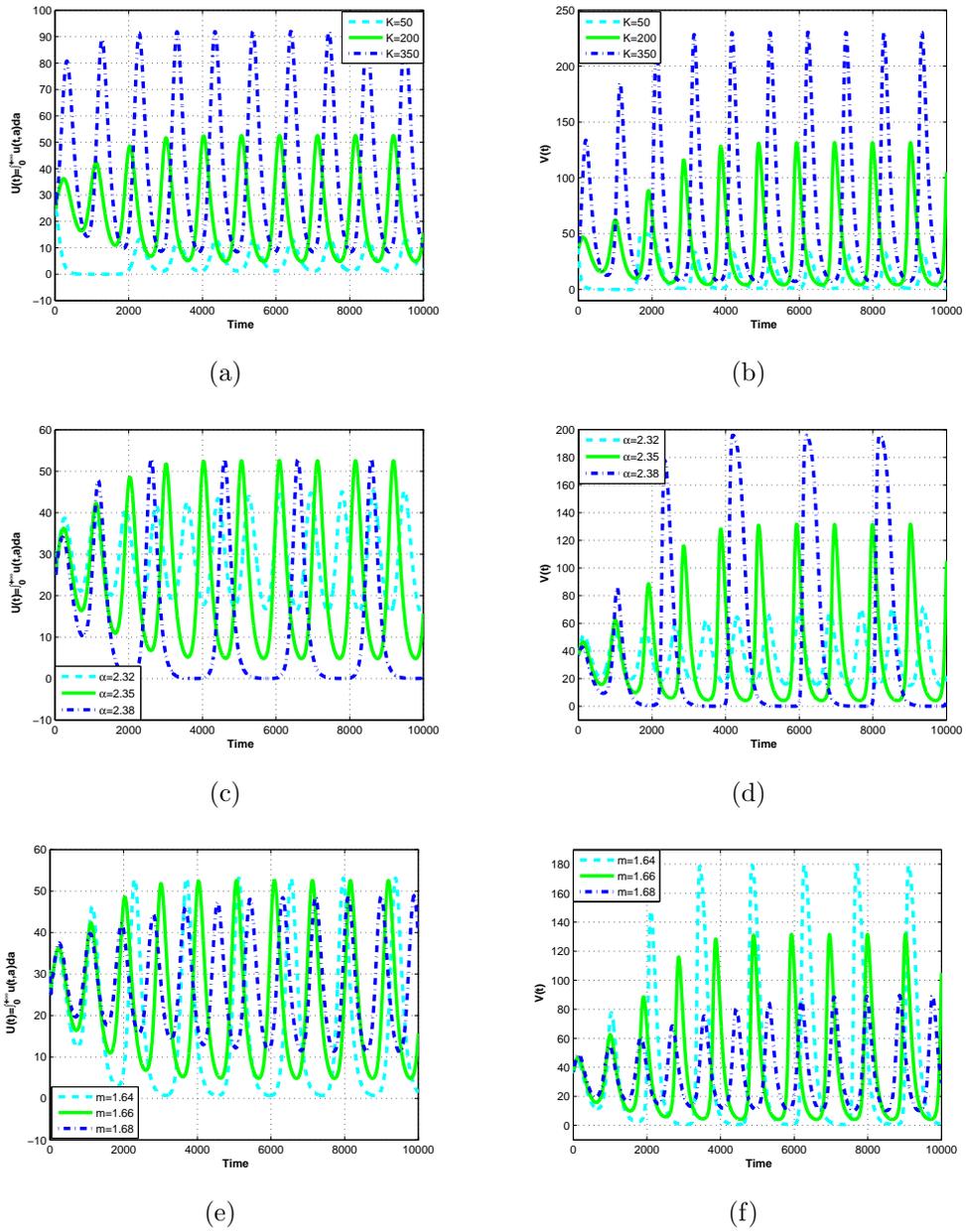

\setlength{\belowcaptionskip}{1pt}
 \centering
\begin{minipage}[c]{0.4\textwidth}
    \centering
    \subfigure []
{\includegraphics[width=2.5in]{TUK.eps}} \hfill
  \end{minipage}
\begin{minipage}[c]{0.4\textwidth}
    \centering
    \subfigure []
{\includegraphics[width=2.5in]{TVK.eps}}
  \end{minipage}
  \begin{minipage}[c]{0.4\textwidth}
    \centering
    \subfigure []
{\includegraphics[width=2.5in]{TUalpha.eps}} \hfill
  \end{minipage}
\begin{minipage}[c]{0.4\textwidth}
    \centering
    \subfigure []
{\includegraphics[width=2.5in]{TValpha.eps}}
  \end{minipage}
\begin{minipage}[c]{0.4\textwidth}
    \centering
    \subfigure []
{\includegraphics[width=2.5in]{TUm.eps}} \hfill
  \end{minipage}
\begin{minipage}[c]{0.4\textwidth}
    \centering
    \subfigure []
{\includegraphics[width=2.5in]{TVm.eps}}
  \end{minipage}
\renewcommand{\figurename}{\footnotesize{Figure}}
\caption{\footnotesize{The effect of the parameters $K$, $\alpha$ and $m$ on the dynamics of the predator and prey populations. }}
\end{figure}

Figure 3(a) and Figure 3(b) show that the carrying capacity of prey $K$ has a greater impact on both prey and predator.
Figure 3(c) and Figure 3(d) illustrate the difference of the dynamics of predator and prey populations in terms of the different capturing rate $\alpha$. When the capturing rate increases gradually, the amplitude of the periodic oscillation phenomena of the two populations become bigger and bigger. We can readily find that when the capturing rate exceeds a certain value $(\alpha=2.35)$, the effect of the catching rate on predator population is less than the prey population.
Compared with the capturing rate $\alpha$ (see Figure 3(c) and Figure 3(d)), the effect of the half capturing saturation constant $m$ (see Figure 3(e) and Figure 3(f)) on the dynamics of system (\ref{system}) is just the opposite.
As is shown in Figure 3(e) and Figure 3(f), the amplitude of the periodic oscillation behaviors gradually decrease with the increase of
the half capturing saturation constant $m$. Obviously, in comparison with Figure 3(e), the change in Figure 3(f) is more evident.
Comparing Figure 4(a), Figure 4(b), Figure 3(c) and Figure 3(d), we can readily observe that the effect of the conversion rate $\eta$ (see Figure 4(a) and Figure 4(b)) on the dynamic behaviors of system (\ref{system}) is consistent with the effect of the capturing rate $\alpha$. The amplitude of the solution curves of system (\ref{system}) demonstrate an increase tendency with the increase of the conversion rate $\eta$.
\begin{figure}[htbp]
\setlength{\belowcaptionskip}{1pt}
 \centering
 \begin{minipage}[c]{0.4\textwidth}
    \centering
    \subfigure []
{\includegraphics[width=2.5in]{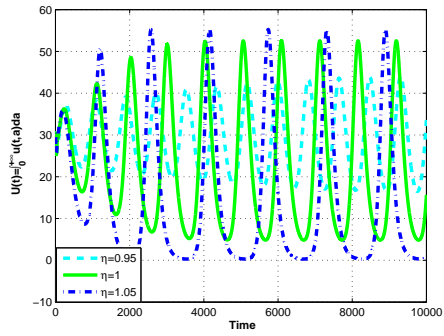}} \hfill
  \end{minipage}
\begin{minipage}[c]{0.4\textwidth}
    \centering
    \subfigure []
{\includegraphics[width=2.5in]{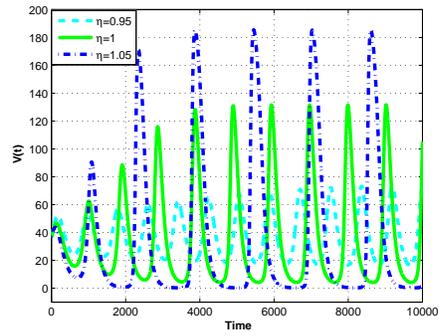}}
  \end{minipage}
\begin{minipage}[c]{0.4\textwidth}
    \centering
    \subfigure []
{\includegraphics[width=2.5in]{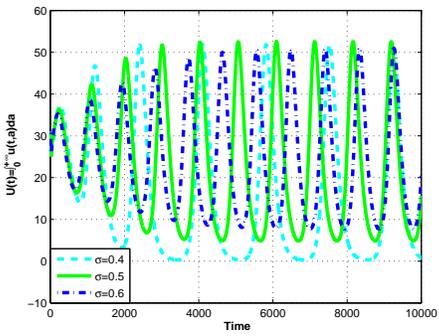}} \hfill
  \end{minipage}
\begin{minipage}[c]{0.4\textwidth}
    \centering
    \subfigure []
{\includegraphics[width=2.5in]{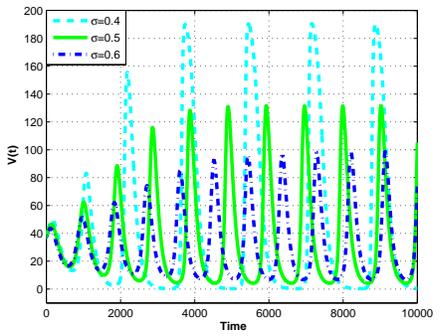}}
  \end{minipage}
\renewcommand{\figurename}{\footnotesize{Figure}}
\caption{\footnotesize{The effect of the parameters $\eta$ and $\sigma$ on the dynamics of the predator and prey populations.}}
\end{figure}
The effect of predator mortality rate $\sigma$ on the dynamics of the system (\ref{system}) is also obvious.
From the Figure 4(c) and Figure 4(d), we can clearly see that as the predator mortality rate $\sigma$ increases gradually, the amplitude of the periodic oscillation behaviors become smaller and smaller. In contrast with the predator population (see Figure 4(c)), the predator mortality rate has a greater impact on the dynamic behaviors of the prey population (see Figure 4(d)).

\section{Conclusions}

\noindent

In our model (\ref{system}), we introduce a predator-prey model with predator-age structure that involves Michaelis-Menten type ratio-dependent functional response. Our results demonstrate that when the bifurcation parameter $\tau$ passes through the critical value $\tau_{k} (k=0,1,2,\cdots)$, the Hopf bifurcation occurs around the positive equilibrium of the system (\ref{system}). Biologically the bifurcation parameter $\tau$ might be taken as a measure of a biological maturation period. Based on the theoretical analysis and numerical simulations, we conclude that the stability of the unique positive equilibrium of system (%
\ref{system}) is unaffected when the biological maturation period $\tau$ is small enough. However, when the maturation period $\tau$
crosses critical value $\tau _{k} (k=0,1,2,\cdots)$, the sustained periodic
oscillation phenomena appear around the positive equilibrium. On the basis of the sensitivity analysis, graphical method illustrates that the effect of parameters $\alpha$, $m$, $\eta$ and $\sigma$ on the dynamics of the prey population is more obvious than the predator population. However, the parameter $K$ has a greater impact on both prey and predator.

\end{document}